\documentclass[12pt]{amsart}

\usepackage{amsmath,amssymb,mathtools}
\usepackage{graphicx}
\usepackage{xcolor}
\usepackage{tikz}
\usepackage{float}
\usetikzlibrary{arrows.meta}
\usepackage[colorlinks=true,citecolor=blue,linkcolor=blue,urlcolor=blue]{hyperref}
\usepackage{geometry}
\newtheorem{theorem}{Theorem}[section]
\newtheorem{maintheorem}{Theorem}

\newtheorem{maintheoremb}{Theorem}

\newtheorem{proposition}[theorem]{Proposition}
\newtheorem{lemma}[theorem]{Lemma}
\newtheorem{corollary}[theorem]{Corollary}
\theoremstyle{definition}
\newtheorem{definition}[theorem]{Definition}
\theoremstyle{remark}

\newtheorem*{remark*}{Remark}

\numberwithin{equation}{section}
\allowdisplaybreaks[4]
\newcommand{\E}{\mathbb E}
\newcommand{\one}{\mathbf 1}
\begin{document}

\title{Phase transition in optimal hypercontractivity}

\date{}

\author{Jie Cao}
\address{School of Mathematical Sciences, Shenzhen University,
Shenzhen 518060, China}
\email{mathcj@foxmail.com}

\author{Shilei Fan}
\address{School of Mathematics and Statistics, and Key Lab NAA--MOE,
Central China Normal University, Wuhan 430079, China}
\email{slfan@ccnu.edu.cn}

\author{Yong Han}
\address{School of Mathematical Sciences, Shenzhen University,
Shenzhen 518060, China}
\email{hanyong@szu.edu.cn}

\author{Yanqi Qiu}
%\thanks{Corresponding author: Yanqi Qiu,  yanqiqiu@ucas.ac.cn}
\address{School of Fundamental Physics and Mathematical Sciences,
HIAS, University of Chinese Academy of Sciences,
Hangzhou 310024, China}
\email{yanqi.qiu@hotmail.com; yanqiqiu@ucas.ac.cn}

\author{Zipeng Wang}
\address{College of Mathematics and Statistics, Chongqing University,
Chongqing 401331, China}
\email{zipengwang2012@gmail.com; zipengwang@cqu.edu.cn}

\begin{abstract}
We  discover an exponent-dependent phase transition phenomenon for optimal hypercontractivity: for every prescribed
$q_0>2$, there exists a reversible continuous-time Markov chain on three state space with normalized spectral gap whose $(2,q)$-optimal  hypercontractivity time satisfies $$ \text{$t_{\mathrm{opt}}(2,q)=\frac12\log(q-1)$ if and only if $q\ge q_0$},$$
whereas the strict inequality  $t_{\mathrm{opt}}(2,q)>\frac12\log(q-1)$ holds  for $2<q<q_0$.
\end{abstract}

\subjclass[2020]{Primary 47D03, 60J27; Secondary 26D15, 39B62}
\keywords{optimal hypercontractivity, hypercontractivity phase transition, continuous-time Markov chain, third-moment-vanishing property.}

\maketitle

\begingroup

\section{Introduction}

\subsection{Motivations and main results}
Let $Q$ be the Markov generator of an irreducible reversible continuous-time Markov
chain on a finite state space $S$, with invariant probability measure $\mu$,
and let $(P_t=e^{tQ})_{t\geq 0}$ be the corresponding Markov semigroup. Scaling the time if necessary, we  may assume throughout that the spectral gap (the second eigenvalue of $-Q$) is normalized by $\lambda(Q)=1$. For $1<p<q<\infty$, define the
optimal hypercontractive time by
$$
t_{\mathrm{opt}}^{(Q)}(p,q)
:=
\inf\bigl\{
t\geq 0:\|P_t\|_{L^p(\mu)\to L^q(\mu)}\leq 1
\bigr\}.
$$
By the standard small perturbations around constant functions gives the following  universal lower bound:
\begin{equation}\label{eq:spectral-lower-bound}
t_{\mathrm{opt}}^{(Q)}(p,q)
\geq
\tau(p,q)
:=
\frac12\log\frac{q-1}{p-1}.
\end{equation}
Further  backgrounds on finite Markov chains can be found in
\cite{Anderson1991,Kelly1979,Norris1997}.

On  the  symmetric two-point space, the classical inequalities of Bonami, Nelson, Beckner, and Gross \cite{Bonami1970,Nelson1973,Beckner1975,Gross1975} shows that equalities \eqref{eq:spectral-lower-bound} simultaneously hold for all exponent pairs. This formula is a natural target for optimal hypercontractivity in
many other settings, see \cite{Andersson2001,Andersson2002, JungePalazuelosParcetPerrinRicard2015,RicardXu2016,
JungePalazuelosParcetPerrin2017,Yao2025,FrankIvanisvili2026, XieZhangCycles2026,XieZhangFreeGroup2026,Yao2026,ZhangFreeOrthogonal2026}.

The main question in this paper is to study whether sharpness in \eqref{eq:spectral-lower-bound} can depend
on the exponent pair. Namely, we want to show the possibility of  the phase transition phenomenon stated as in the abstract.    

Note that,  if equality holds  for even one component pair in \eqref{eq:spectral-lower-bound}, the spectral-gap eigenspace must have the third-moment-vanishing (TMV) property (see Section~\ref{sec:three-state-family} for the precise definition and reduction). Therefore, for finding the phase transition, we shall restrict our attention to  the class of the Markov generators with TMV property.  Since two-point chains are too rigid, it is natural to consider three-states chains with   TMV property. The Markov generators with TMV property on three-state space  can be fully classified using the method in Section~\ref{sec:three-state-family}. However, for our main purpose, we only restrict to the simplest ones as shown in Figure~\ref{fig:reflection-symmetric-path}. These Markov chains has been 
studied by Chen, Liu, and Saloff-Coste in \cite{ChenLiuSaloffCoste2008}.

\begin{figure}[htbp]
\centering
\includegraphics[scale=1.2]{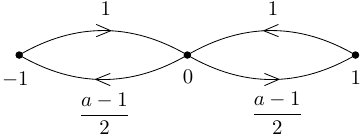}
\caption{The reflection-symmetric three-point Markov chain with generator
$Q_a$. Each label gives the corresponding jump rate.}
\label{fig:reflection-symmetric-path}
\end{figure}

More precisely,  we shall consider  the family $\{Q_a: a>1\}$ of Markov generators,  which, with respect to the order $\{0,-1,1\}$, are given by
\[
Q_a=
\begin{pmatrix}
-(a-1) & (a-1)/2 & (a-1)/2\\
1 & -1 & 0\\
1 & 0 & -1
\end{pmatrix}.
\]
For each $a>1$. The corresponding Markov chain is irreducible and reversible with invariant measure
$\mu_a(0)=1/a$ and $\mu_a(\pm1)=(a-1)/(2a)$. Its spectrum is
$\{0,-1,-a\}$, and hence $\lambda(Q_a)=1$.

Gross identified the full family of sharp hypercontractive inequalities
with the sharp logarithmic Sobolev constant. More precisely,  set 
\[
\rho_{\mathrm{LS}}(Q)
:=      \inf_{\operatorname{Ent}_\mu(f^2)>0}    \frac{2   \langle f,-Qf\rangle_{L^2(\mu)}}{ \operatorname{Ent}_\mu(f^2)} \text{~with~}\operatorname{Ent}_\mu(f^2)=\!\int\! f^2\log f^2\,d\mu
-\! \int \! f^2\,d\mu\cdot \log \! \int \! f^2 \,d\mu.
\]
Then under the assumption $\lambda(Q)=1$, one has
\(\rho_{\mathrm{LS}}(Q)\le1\), and the Gross correspondence states that 
\[
\rho_{\mathrm{LS}}(Q)=1 \Longleftrightarrow  t_{\mathrm{opt}}^{(Q)}(p,q)
 =  \tau(p,q) \, \, \text{for any $1<p<q<\infty$.}
\]

For $1<a\leq2$, Chen, Liu, and Saloff-Coste \cite{ChenLiuSaloffCoste2008} proved that its logarithmic
Sobolev constant attains the extremal value allowed by the spectral gap
precisely when $7/4\leq a\leq2$.  Then, by Gross's correspondence,  the sharp hypercontractive relation holds simultaneously
for all exponent pairs.  See Gross~\cite[Theorems~1 and~2]{Gross1975},
Diaconis and Saloff-Coste~\cite[Theorem~3.5]{DiaconisSaloffCoste1996}, and
Chen, Liu, and Saloff-Coste~\cite[Theorem~1.2]{ChenLiuSaloffCoste2008} for more backgrounds.

Their result determines precisely when the spectral-gap bound is attained
simultaneously for every exponent pair, and naturally leads to a finer
question: once this simultaneous sharpness fails, must the bound fail to be
sharp for every exponent pair, or can sharpness persist on only part of the
exponent range?
\begin{definition}
We say that a Markov chain exhibits a hypercontractivity phase transition if
\[
t_{\mathrm{opt}}^{(Q)}(p,q)=\tau(p,q)
\]
for at least one, but not for every, pair of exponents
$1<p<q<\infty$. Equivalently, we say that its generator $Q$ has a
hypercontractivity phase transition.
\end{definition}

Our first main result gives a sharp affirmative answer to the existence of hypercontractivity phase transition and shows, moreover,
that the transition point can be prescribed.

\begin{maintheorem}\label{thm:prescribed-local-threshold}
For every $q_0>2$, there exists an irreducible reversible continuous-time
Markov chain on a three-state space with spectral gap one such that
\[
t_{\mathrm{opt}}^{(Q)}(2,q)=\tau(2,q)=\frac12\log(q-1)
\]
if and only if $q\geq q_0$. Equivalently,
\[
t_{\mathrm{opt}}^{(Q)}(2,q)>\tau(2,q)=\frac12\log(q-1)
\]
if and only if $2<q<q_0$.
\end{maintheorem}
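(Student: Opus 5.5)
The plan is to work inside the family $\{Q_a: a>1\}$ and choose $a=a(q_0)$ slightly below $7/4$, or more generally in the range where $\rho_{\mathrm{LS}}(Q_a)<1$. By Gross's correspondence, $t_{\mathrm{opt}}(2,q)=\tau(2,q)$ cannot then hold for every $q$. Throughout we fix $p=2$ and write $t=\tau(2,q)=\frac12\log(q-1)$, so that $e^{-2t}=1/(q-1)$. The question becomes, for each $q$, whether
\[
\|P_t f\|_{L^q(\mu_a)}\le \|f\|_{L^2(\mu_a)}
\]
holds for all $f$.

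The first reduction uses the reflection symmetry $x\mapsto -x$ and positivity. Write $f=c+\alpha\phi_1+\beta\phi_a$, where $\phi_1(x)=x$ spans the eigenvalue $-1$ and $\phi_a$ is the even eigenfunction for eigenvalue $-a$, namely $\phi_a(0)=1$, $\phi_a(\pm1)=-1/(a-1)$. Normalize $c=1$. Then $P_tf=1+\alpha e^{-t}x+\beta e^{-at}\phi_a$, and the inequality becomes an explicit two-variable inequality
\[
F_q(\alpha,\beta):=\|f\|_2^{q}-\|P_tf\|_q^q\ge 0 .
\]

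The second step is a local analysis at $(\alpha,\beta)=(0,0)$. The second-order term vanishes in the $\alpha$ direction because $t=\tau$, and it is strictly positive in the $\beta$ direction because $a>1$. The third-order term is killed by the TMV property, since $\int x^3\,d\mu=0$. The decisive quantity is therefore the fourth-order coefficient in $\alpha$ after optimizing $\beta$ at order $\alpha^2$. That is, we set $\beta=\gamma\alpha^2$ and minimize over $\gamma$, which gives a Schur-complement type term coming from the cross term $\int x^2\phi_a\,d\mu\neq 0$. This produces an explicit function $\kappa(a,q)$ such that, for small perturbations, the inequality holds if $\kappa>0$ and fails if $\kappa<0$. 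The claim to establish is that, for fixed $a$ in a suitable range below $7/4$, the sign of $\kappa(a,\cdot)$ changes exactly once, at some $q_*(a)$. Moreover $q\mapsto$ sign should be negative for $q<q_*(a)$ and positive for $q>q_*(a)$, with $q_*(a)\to 2$ as $a\uparrow 7/4$ and $q_*(a)\to\infty$ as $a$ decreases to some endpoint. By continuity and monotonicity in $a$, we then choose $a$ with $q_*(a)=q_0$. The behaviour as $q\to2$ matches the log-Sobolev limit, which is consistent with the threshold $7/4$. This step yields the ``only if'' direction: for $2<q<q_0$ we have $\kappa<0$, so a small perturbation violates contractivity and gives $t_{\mathrm{opt}}(2,q)>\tau(2,q)$.

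The main obstacle is the global ``if'' direction: for $q\ge q_0$ we must show $F_q\ge0$ for all $(\alpha,\beta)$, not only near the origin. Several reductions are available. By symmetry we may take $\alpha\ge0$. Replacing $f$ by $|f|$ handles sign issues, since $P_t$ is positivity preserving, so we may restrict to $f\ge0$. Homogeneity then reduces the problem to a compact two-parameter family. The plan is a two-regime argument. Near constants, the fourth-order expansion with $\kappa\ge0$ gives a uniform local bound; at $q=q_0$ the fourth-order term degenerates, so one must go to sixth order. Away from constants, the inequality should hold with slack, using that $\|P_t\|_{2\to q}$ is strictly below $1$ off a neighbourhood of constants. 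That slack would come from comparison with the two-point Bonami--Beckner inequality applied to the odd part, together with contraction of the even part by $e^{-at}$.

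We also need monotonicity in $q$: once the inequality holds at $\tau(2,q_0)$, it should hold for larger $q$. To get this, I would try to show that the set of good $q$ is an up-set. Write $P_{\tau(2,q')}=P_{\tau(q,q')}P_{\tau(2,q)}$ and combine this with the fact that hypercontractivity from $q$ to $q'$ holds at time $\tau(q,q')$ for $q\ge q_0$. This in turn needs a $(p,q')$ version of the local analysis with $p\ge q_0$. I expect this global verification, likely requiring careful calculus or interval estimates on the explicit three-point norms, to be the hardest part.
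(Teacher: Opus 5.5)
Your local analysis is, up to duality, the paper's discriminant criterion (Proposition~\ref{prop:local-criterion}). Since $P_t$ is self-adjoint, $\|P_t\|_{2\to q}=\|P_t\|_{q'\to2}$ and $\tau(2,q)=\tau(q',2)$, and your Schur complement in the variables $(\beta,\alpha^2)$ plays the role of $\Delta(a,p)$ at $p=q'$. It is fine in outline, although you assert rather than prove the single sign change and the monotone dependence on $a$; the paper gets these from Lemmas~\ref{lem:mon-a}, \ref{lem-special} and~\ref{lem:unique-p}.

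The genuine gap is the global ``if'' direction, which is the real content of the theorem. None of your proposed ingredients supplies it.
\begin{itemize}
\item That $\|P_tf\|_q/\|f\|_2$ stays below $1$ away from constants is exactly what must be proved. The mechanism you suggest is too lossy in its most natural implementation. Apply the uniform two-point Bonami--Beckner inequality on the block $\{\pm1\}$ and then treat the remaining even problem. Already for the odd function $f(x)=x$ this leads to the requirement $\frac{a-1}{a}\le(\frac{a-1}{a})^{q/2}$, which is false for every $q>2$, even though the original inequality holds for this $f$.
\item The sixth-order expansion at $q=q_0$ is not carried out.
\item The up-set argument via $P_{\tau(2,q')}=P_{\tau(q,q')}P_{\tau(2,q)}$ presupposes sharp $(q,q')$ hypercontractivity for $q\ge q_0$. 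That is not known and is no easier than the statement being proved. It is also unnecessary once the global inequality is established for each admissible exponent separately.
\end{itemize}

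The missing idea is the one in Section~\ref{sec:threshold-curve}.
\begin{itemize}
\item Work on the dual side $(p,2)$. There $\|P_tf\|_2^2$ is the explicit quadratic form $\mathcal R$, with weights $\theta$ and $\theta^a$ on the two nonconstant modes, and all the nonlinearity sits in $\|f\|_p$.
\item Maximize $\mathcal R$ on the compact set $\{\mathcal P=1\}$. A maximizer has positive coordinates (Claim~I).
\item After a hyperbolic change of variables, the Lagrange equations force every maximizer to be even, $u=v$ (Claim~II). This is where $\Delta(a,p)\le0$ enters, in the form $(p-1)^{a-1}\le(2ap-3)/(2ap-3\theta)$, together with Lemmas~\ref{lem:Ltheta-bound} and~\ref{lem:terminal-hyperbolic}.
\item On the even slice the problem collapses to the weighted two-point inequality of Lemma~\ref{lem:weighted-two-point}, valid for $3/2\le a\le2$, where the even mode carries the stronger contraction factor $\theta^{a/2}$.
\end{itemize}
This treats the boundary case $\Delta=0$ (your $q=q_0$) with no higher-order expansion, and handles each exponent on its own, so no monotonicity in $q$ is needed. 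The theorem then follows by choosing $a$ with $p_a=q_0/(q_0-1)$ through Proposition~\ref{prop-one-one} and dualizing.
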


The phenomenon in Theorem~\ref{thm:prescribed-local-threshold} has no
two-point counterpart, and there is a simple structural reason why it 
becomes possible on three points. At a structural level, the
mean-zero subspace of a two-point space is one-dimensional, and hence there
is only one nonconstant spectral mode. On a three-point space, by contrast,
the mean-zero subspace is two-dimensional. In the family above the two
nonconstant modes have eigenvalues $-1$ and $-a$, and their interaction can
produce different hypercontractive behavior for different exponent pairs.

Theorem~\ref{thm:prescribed-local-threshold} follows from a complete phase
diagram for the family $(Q_a)_{a>1}$. Our analysis reveals three regimes. For
$1<a\leq3/2$, the spectral-gap bound is never attained. For
$3/2<a<7/4$, sharpness depends on the exponent pair and is separated from
strict inequality by a unique critical exponent. For $a\geq7/4$, sharpness
holds for every exponent pair.

\begin{maintheoremb}
\label{thm:family-phase-diagram}
Let $a>1$. Then the generator   $Q_a$ has a
hypercontractivity phase transition if and only if
$a\in (3/2, 7/4)$. More precisely,  for  any $3/2<a<7/4$, there is a unique $p_a\in(1,2)$ such that for every $1<p<2$,
\[
t_{\mathrm{opt}}^{(Q_a)}(p,2)
=\frac12\log\frac1{p-1}
\quad\Longleftrightarrow\quad 1<p\le p_a,
\]
whereas
\[
t_{\mathrm{opt}}^{(Q_a)}(p,2)
>\frac12\log\frac1{p-1}
\quad\Longleftrightarrow\quad p_a<p<2.
\]
 Moreover,  $p_a \in (1,2)$ is the  unique solution of  the equation
\begin{align}\label{pa-eq}
(p_a-1)^{a-1}
=\frac{2ap_a-3}
       {2ap_a-3(p_a-1)}.
\end{align}
 In particular,  the map  $a\mapsto p_a$ is one-to-one from $(3/2, 7/4)$ to $(1,2)$. 
\end{maintheoremb}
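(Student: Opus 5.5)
The plan is to reduce the $(p,2)$ problem, via the small-perturbation and symmetry structure of $Q_a$, to a one-parameter family of test functions and an explicit two-mode computation. Duality gives $t_{\mathrm{opt}}(p,2)=t_{\mathrm{opt}}(2,p')$ with $p'=p/(p-1)$, and $\tau(p,2)=\tau(2,p')$. The range $1<p<2$ therefore corresponds to $2<q<\infty$, and the statement for $p\le p_a$ translates into Theorem~\ref{thm:prescribed-local-threshold} with $q_0=p_a'$.

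\textbf{Step 1: symmetric test functions.} The eigenvector for $-1$ is the odd function $x\mapsto x$, which has vanishing third moment by reflection symmetry. The eigenvector for $-a$ is the even function $\psi$ with $\psi(\pm1)=1$ and $\psi(0)=-(a-1)$. At $t=\tau(p,2)$, I would test the inequality $\|P_t f\|_2\le \|f\|_p$ on $f=1+\varepsilon x+\delta\psi$ and expand to fourth order in $\varepsilon$, with $\delta\sim\varepsilon^2$ optimized. The second-order terms cancel by the choice $t=\tau$, and the third-order odd terms vanish by TMV. The quartic coefficient is then a quadratic in $\delta/\varepsilon^2$ minus a quartic form. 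Minimizing over $\delta$ gives a necessary condition for sharpness of the form $\Phi_a(p)\ge 0$, where $\Phi_a$ involves $e^{-2a\tau}=(p-1)^{a}$. I expect this local condition to reduce, after algebra, to \eqref{pa-eq}: it should fail exactly when $(p-1)^{a-1}$ exceeds $(2ap-3)/(2ap-3(p-1))$. This matches the monotone comparison of $e^{-(a-1)\cdot 2\tau}$ with a rational function in $p$.

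\textbf{Step 2: sufficiency for $p\le p_a$.} The local condition must be upgraded to the global inequality $\|P_\tau f\|_2\le\|f\|_p$ for all $f>0$. Write $f=(u,v,w)$ on $\{0,-1,1\}$ and normalize $u=1$, which leaves a two-variable inequality. I would first use a symmetrization (convexity) argument to show that the worst case can be analyzed along the natural path. Then I would interpolate in time. The standard route is to show that $F(t)=\log\|P_t f\|_{q(t)}$ is nonincreasing, where $q(t)=1+(p-1)e^{2t}$, and that this holds for $t\le\tau$. This reduces to a family of $L^{q}$-log-Sobolev inequalities for $2\le$ ... Alternatively, and more concretely on three points, I would use the explicit kernel of $P_t$, namely
\[
P_t=\Pi+e^{-t}\Pi_1+e^{-at}\Pi_a,
\]
and prove a two-variable inequality by calculus. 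The critical points would be located by the Lagrange equations, and the boundary cases handled separately. The monotonicity in $p$ should follow because $(p-1)^{a-1}$ and the rational function cross only once on $(1,2)$.

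\textbf{Step 3: uniqueness of $p_a$ and bijectivity.} Set $g(p)=(a-1)\log(p-1)-\log\frac{2ap-3}{2ap-3(p-1)}$ on the interval where $2ap>3$. Check the signs at the endpoints: as $p\to1^+$ we have $g\to-\infty$, and at $p=2$,
\[
g(2)=-\log\frac{4a-3}{4a-3}=0,
\]
so the sign near $p=2$ must come from derivatives. That sign should be decided by whether $a<7/4$, and the behaviour near the lower side should be decided by whether $a>3/2$. I would then prove a single sign change by a convexity or derivative argument. Implicit differentiation then shows that $a\mapsto p_a$ is strictly monotone, with limits $1$ and $2$ at $a=3/2$ and $a=7/4$ respectively. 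For the regimes outside $(3/2,7/4)$: for $a\ge7/4$ use the Chen–Liu–Saloff-Coste result together with the Gross correspondence; for $a\le3/2$ show that the local condition fails for every $p$.

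\textbf{Main obstacle.} The hardest part will be the global sufficiency in Step 2. The local fourth-order condition is not obviously sufficient, and a three-point inequality in two variables with non-integer exponents requires careful case analysis. I would first try to prove it by showing that the extremal is always attained on the symmetric or quadratic perturbation curve and then checking that curve directly.
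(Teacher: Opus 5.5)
Your Steps 1 and 3 follow the paper. The fourth-order expansion along $\one+\varepsilon e_1+\delta e_2$ with $\delta\sim\varepsilon^2$ gives a binary quadratic form with discriminant $\Delta(a,p)$. The single sign change follows because $H_a'(p)$ has the sign of $2(a-1)(2a-3)p^2-3p+3$, which is positive at $p=1$ and equals $(4a-3)(4a-7)<0$ at $p=2$. For bijectivity, it is quicker to use that $a\mapsto\Delta(a,p)$ is strictly decreasing with $\Delta(3/2,p)>0>\Delta(7/4,p)$; this also settles the outer sign regimes.

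The genuine gap is Step 2, which is the entire ``if'' direction for $p\le p_a$. Both routes you suggest fail.

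\emph{Interpolation.} Gross-type interpolation needs, at every time, the sharp $L^{q(t)}$-log-Sobolev inequality for $g=P_tf$. At $t=\tau$ this is $\operatorname{Ent}(g^2)\le 2\langle g,-Q_ag\rangle$ for $g=P_\tau f$, and such $g$ include all functions near constants. For $a<7/4$ the sharp log-Sobolev inequality already fails near constants: the $p\to2$ limit of the local form has discriminant $\lim_{p\to2}\Delta(a,p)/(2-p)^2=a(7-4a)/3>0$. So $t\mapsto\|P_tf\|_{q(t)}$ is genuinely non-monotone near $t=\tau$ for some positive $f$, even though the endpoint inequality holds.

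\emph{Symmetrization.} No symmetrization or convexity principle can place the extremal on $\xi=\eta$ for free. For $3/2<a<7/4$ the inequality holds on symmetric functions for \emph{every} $p\in(1,2)$, since it reduces to a weighted two-point inequality. But the local counterexamples for $\Delta>0$ have $\xi\neq\eta$. Any reduction to $\xi=\eta$ must therefore use the hypothesis $\Delta(a,p)\le0$ in an essential way.

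The paper's mechanism is as follows. Write $\mathcal P$ for the normalized $p$-th moment and $\mathcal R$ for $\|P_\tau f\|_2^2$, both as functions of $(w,u,v)=(f(0),f(-1),f(1))$. Maximize the quadratic $\mathcal R$ on $\{\mathcal P=1\}$ and show the maximizer is interior. The Lagrange equations, with multiplier $2\kappa/p$ by Euler's identity, then give
$\theta(u-v)=\kappa(u^\theta-v^\theta)$ and $\theta^a\bigl(w-\frac{u+v}{2}\bigr)=\kappa\bigl(w^\theta-\frac{u^\theta+v^\theta}{2}\bigr)$.
If $u\neq v$, substitute $v/u=e^{-2s}$ and $t=e^sw/u$. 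A hyperbolic bound, together with $\kappa\ge1$ and $a>3/2$, forces $t<1$. Inserting $\theta^{a-1}\le(2ap-3)/(2ap-3\theta)$, which is exactly $\Delta\le0$, then contradicts the positivity of an explicit hyperbolic expression. Hence $u=v$, and the weighted two-point inequality finishes the proof.

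There are two smaller gaps in the outer regimes.

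\emph{The range $a\le3/2$.} Excluding a phase transition requires strict inequality at \emph{every} pair $(p,q)$. Failure at the pairs $(p,2)$ and $(2,q)$ does not rule out sharpness at some pair with $p,q\neq2$. You must redo the expansion at $t=\tau(p,q)$ and prove that the resulting $(p,q)$-discriminant is positive for $a\le3/2$.

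\emph{The range $a>2$.} The Chen--Liu--Saloff-Coste criterion $\theta\in[3/4,1]$ does not apply, because $I+Q_a$ has the negative entry $2-a$ and is not a Markov kernel. You need to write $Q_a=(a-1)(\widetilde K_r-I)$ with $r=(a-2)/(a-1)$ and use their separate result $\alpha(\widetilde K_r)=(1-r)/2$.
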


For the generators $Q_a$ with the other parameters $a\in (1, \infty) \setminus (3/2, 7/4)$, we have the following 

\begin{proposition}[See 
Corollary~\ref{cor-non-classical} and
Proposition~\ref{thm:global-rigidity}]
\label{prop:family-phase-diagram}
Let $a\in (1, \infty) \setminus (3/2, 7/4)$. Then 
\begin{itemize}
\item If $1<a\le 3/2$, then, for every $1<p<2$,
\[
t_{\mathrm{opt}}^{(Q_a)}(p,2)
>\frac12\log\frac1{p-1}.
\]

\item If $a\ge 7/4$, then $Q_a$ is hypercontractively rigid; in
particular, for every $1<p<2$,
\[
t_{\mathrm{opt}}^{(Q_a)}(p,2)
=\frac12\log\frac1{p-1}.
\]
\end{itemize}
\end{proposition}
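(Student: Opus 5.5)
The plan is to treat the two bullets separately, since they rest on different mechanisms. Throughout, write $f=1+\varepsilon\varphi$ with $\varphi$ in the spectral-gap eigenspace $E_{-1}=\operatorname{span}\{\varphi\}$, $\varphi(0)=0$, $\varphi(\pm1)=\pm1$. This $\varphi$ is odd under the reflection $x\mapsto -x$, so all its odd moments vanish and TMV holds automatically. Consequently the second-order test functions do not rule out sharpness, and the obstruction for $a\le 3/2$ has to come from a higher-order expansion or from a genuinely nonlinear test function.

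\textbf{The case $a\ge 7/4$.} For $7/4\le a\le 2$ the result of Chen, Liu and Saloff-Coste \cite{ChenLiuSaloffCoste2008} gives $\rho_{\mathrm{LS}}(Q_a)=1$. Gross's correspondence then gives $t_{\mathrm{opt}}(p,q)=\tau(p,q)$ for all pairs, which is rigidity. For $a>2$ I would show that $\rho_{\mathrm{LS}}(Q_a)=1$ persists. The Dirichlet form is $\mathcal E_a(f)=\frac{a-1}{2a}\bigl[(f(1)-f(0))^2+(f(-1)-f(0))^2\bigr]$. This is linear increasing in $(a-1)$ at fixed edge structure, but the measure also changes with $a$, so monotonicity in $a$ is not immediate. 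I would therefore reduce to the Chen--Liu--Saloff-Coste computation: by symmetrization (the reflection together with the convexity of the entropy functional) it suffices to minimize over two-parameter functions. I would then run their argument verbatim, which amounts to checking that the relevant one-variable inequality is preserved for $a>2$. A cleaner alternative is a comparison via tensorization or a mixing argument: $Q_a$ for $a>2$ is $Q_2$ plus a nonnegative multiple of a generator with a larger gap acting only on the $E_{-a}$ direction. I would try the direct inequality first.

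\textbf{The case $1<a\le 3/2$.} Fix $p\in(1,2)$ and $t=\tau(p,2)$, so $e^{-2t}=p-1$. I would exhibit $f$ with $\|P_tf\|_2>\|f\|_p$. The natural test family is $f=1+\varepsilon\varphi+\delta\psi$, where $\psi=\one_{\{0\}}-1/a$ spans the second nonconstant direction (its mean-zero part is an $E_{-a}$ eigenvector). At $t=\tau$ the quadratic terms in $\varepsilon$ cancel exactly. I would expand $\|P_tf\|_2^2-\|f\|_p^2$ to fourth order, with $\delta\sim\varepsilon^2$ chosen optimally. Since $\varphi^2$ has a nonzero $E_{-a}$-component, the coupling term $\varepsilon^2\delta$ appears at the same order as $\varepsilon^4$. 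Optimizing in $\delta/\varepsilon^2$ yields a quartic coefficient $C(a,p)\varepsilon^4$. I expect $C(a,p)>0$ for all $p\in(1,2)$ exactly when $a\le3/2$, whereas for $a\in(3/2,7/4)$ it is positive only for $p$ near $2$, consistent with the threshold $p_a$ and with $p_a\to 2$ as $a\downarrow 3/2$.

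The main obstacle is this last expansion. The step I would check most carefully is the sign analysis of $C(a,p)$ uniformly in $p\in(1,2)$, including the endpoint behaviour as $p\to1$ and $p\to2$ (the latter being the log-Sobolev limit). At $a=3/2$ the coefficient may vanish at some boundary point, in which case a sixth-order term or an exact non-perturbative test function is needed. If the perturbative route degenerates, I would instead use functions $f=(1+s\varphi)^{\gamma}$-type profiles supported along the extremal curve suggested by \eqref{pa-eq}. For these one can compare $\|P_tf\|_2$ and $\|f\|_p$ in closed form, since $P_t$ acts explicitly: $P_t\varphi=e^{-t}\varphi$ and $P_t\psi=e^{-at}\psi$.
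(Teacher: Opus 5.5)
Your treatment of $7/4\le a\le 2$ (Chen--Liu--Saloff-Coste plus Gross) and your perturbative strategy for $1<a\le 3/2$ follow the paper. The argument for $a>2$, however, has a genuine gap. Saying you would rerun \cite{ChenLiuSaloffCoste2008} ``verbatim'' is not a proof, and both supporting ideas fail.

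\emph{Symmetrization.} Reflection symmetrization cannot give a lower bound on the log-Sobolev ratio. Replacing $f$ by $\sqrt{(f^2+(f\circ R)^2)/2}$, with $R(x)=-x$, lowers the Dirichlet form but also lowers $\operatorname{Ent}_\mu(f^2)$, so the ratio is not controlled. The near-extremals $\one+\varepsilon\varphi$ are odd in any case. The reduction to two parameters is just homogeneity on three points, so the actual inequality needed for $a>2$ is never identified.

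\emph{Comparison with $Q_2$.} Writing $Q_a$ as $Q_2$ plus a nonnegative generator ``acting only on $E_{-a}$'' is false. Both $\mu_a$ and $e_2=(-(a-1),1,1)^\top$ move with $a$. In fact $Q_a-Q_2$ is the generator that only jumps out of $0$ (rate $(a-2)/2$ to each of $\pm1$), and it is reversible for no full-support measure. Comparisons across different measures (Holley--Stroock type) lose constants and cannot give exactly $\rho_{\mathrm{LS}}=1$.

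\emph{The missing idea} is a time change. For $a>2$ set $r=(a-2)/(a-1)\in(0,1)$; then $\frac1{a-1}Q_a=\widetilde K_r-I$. Here $\widetilde K_r$, with holding probability $r$ at $\pm1$ and none at $0$, is a genuine $\mu_a$-reversible Markov kernel covered by \cite[Theorem~4.1]{ChenLiuSaloffCoste2008}, which gives $\alpha(\widetilde K_r)=(1-r)/2$. Hence $\rho_{\mathrm{LS}}(Q_a)=(a-1)\cdot 2\alpha(\widetilde K_r)=(a-1)(1-r)=1$.

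For $1<a\le 3/2$ you stop exactly at the decisive step: the sign of the optimized quartic coefficient, which you only ``expect''. The paper computes it:
\[
\mathcal D_{a,p}(1+w-v,1+w+v)=\frac{(a-1)(p-1)}{a^2}P_{a,p}(w,v^2)+o(w^2+v^4),
\]
with $P_{a,p}$ as in \eqref{quad-form}. Its $X^2$-coefficient $1-(p-1)^{a-1}$ is positive, so failure at a fixed $p$ follows, via $w=x_0\varepsilon^2$ and $v=\varepsilon$, as soon as the discriminant $\Delta(a,p)$ of \eqref{eq:Delta} is positive. Positivity holds because $a\mapsto\Delta(a,p)$ is strictly decreasing (Lemma~\ref{lem:mon-a}) and $\Delta(3/2,p)=(2-p)^2\sqrt{p-1}/(1+\sqrt{p-1})>0$ on $(1,2)$. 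So nothing degenerates at $a=3/2$: the zeros sit at the excluded endpoints $p=1,2$. No uniformity in $p$ is needed since $p$ is fixed, and your fallbacks (sixth-order terms, non-perturbative profiles) are unnecessary.
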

\endgroup

It is worthwhile to mention that there are two different types of phase transitions in the family $(Q_a)_{a>1}$ (the second compares different Markov chains, while the first concerns only a fixed one):
\begin{itemize}
\item \emph{Theorem~\ref{thm:family-phase-diagram}: the exponent phase transition.}  Fix
$a\in(3/2,7/4)$, hence fix one Markov chain, and vary the exponent
pair. Equality in the universal lower bound then holds on one side
of the phase-transition curve and fails on the other.  This phase transition is the main focus of this paper.
\item \emph{Theorem~\ref{thm:family-phase-diagram} and Proposition~\ref{prop:family-phase-diagram}: the parameter phase transition.}   For $a\le3/2$, the strict inequality  $t_{\mathrm{opt}}^{(Q_a)}(p,2)>\tau(p,2)$ holds for 
$1<p<2$; hypercontractivity phase transition occurs for
$3/2<a<7/4$; and for $a\ge7/4$ hypercontractive rigidity holds.
\end{itemize}

The phase-transition curve \eqref{pa-eq}, which joins $(3/2,1)$ and $(7/4,2)$, is determined by the vanishing of the discriminant 
\[
\Delta(a,p):=
 \frac{(2-p)(2ap-3p+3)}{3}
\left(
(p-1)^{a-1}
-\frac{2ap-3}{2ap-3p+3}
\right)
\] of the quadratic form \eqref{quad-form} below. 
For each $a\in(3/2,7/4)$, equality
$t_{\mathrm{opt}}^{(Q_a)}(p,2)=\tau(p,2)$ holds on and below this curve, while the
inequality is strict above it.
See Figure \ref{fig:implicit-phase-diagram}.

\begin{figure}[htbp]
\centering
\includegraphics[scale=1.2]{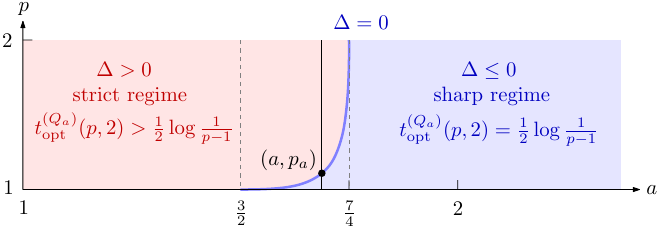}
\caption{The parameter--exponent phase diagram for the family $Q_a$.
The blue region, including the transition curve, is the sharp regime
$t_{\mathrm{opt}}^{(Q_a)}(p,2)=\tau(p,2)$; the red region is the strict
regime.  In particular, the entire region to the right of $a=7/4$ is
blue because $Q_a$ is hypercontractively rigid there.}
\label{fig:implicit-phase-diagram}
\end{figure}

\begin{remark*}
The optimal
constants for biased Bernoulli variables rule out the hypercontractivity phase transition on
two states (see
Oleszkiewicz~\cite{Oleszkiewicz2003}, Wolff~\cite{Wolff2007} and our previous work \cite{CaoFanHanQiuWang2026}).  Thus three states are the
first setting in which a fixed Markov chain can exhibit this phase
transition.
\end{remark*}

\subsection{Comparison with results in literature}
The weighted three-point path introduced in
Figure~\ref{fig:reflection-symmetric-path} is a very simple
birth-and-death Markov chain.  Closely related three-point sticks
have appeared in the study of logarithmic Sobolev inequalities; see
Chen, Liu, and Saloff-Coste~\cite[Section~4]{ChenLiuSaloffCoste2008}
and Faust and Fawzi~\cite[Section~V-E]{FaustFawzi2024}.  The
comparison with \cite[Theorem~4.3]{ChenLiuSaloffCoste2008} is described as follows.
If $\theta$ denotes the parameter used in \cite[Theorem~4.3]{ChenLiuSaloffCoste2008}, then, after relabelling
the three states,
\[
Q_a=K_\theta-I,\qquad \theta=a-1,
\]
and the invariant measures agree.  Since their spectral gap is one
and our convention gives $\rho_{\mathrm{LS}}=2\alpha_\theta$, their
criterion
\[
\alpha_\theta=1/2
\quad\Longleftrightarrow\quad
\theta\in[3/4,1]
\]
is precisely hypercontractive rigidity for $7/4\le a\le2$.
Their restriction $\theta\le1$, or equivalently $a\le2$, is required
because $K_\theta$ is a discrete-time Markov kernel.  By contrast,
$Q_a$ remains a continuous-time Markov generator for every $a>1$.
In particular, when $a>2$, the matrix $I+Q_a$ has the negative holding entry
$2-a$ and hence is no longer a Markov kernel, although
$e^{tQ_a}$ is a Markov kernel for every $t\ge0$.

Theorem~\ref{thm:prescribed-local-threshold} and
Chen, Liu, and Saloff-Coste's
\cite[Theorem~4.3]{ChenLiuSaloffCoste2008} have different but largely
complementary emphases. Chen, Liu, and Saloff-Coste determine when the optimal
logarithmic Sobolev constant reaches one half of the spectral gap.
Our main concern is instead the interval $3/2<a<7/4$.  In this
interval their theorem implies $\alpha_\theta<1/2$, but it does not
distinguish the exponent pairs for which the universal
hypercontractive lower bound is attained from those for which the
inequality is strict.  Theorem~\ref{thm:prescribed-local-threshold}
resolves precisely this pairwise question and reveals the
hypercontractivity phase transition.  Thus this apparently simple
Markov chain has a substantially richer hypercontractive behavior
than its logarithmic Sobolev constant alone records.

\section{The third-moment-vanishing property and three-state TMV chains}
\label{sec:three-state-family}

\subsection{The TMV condition}

The key notion in our construction is the
\emph{third-moment-vanishing property (abbreviated by TMV)}, defined
as follows.

Let
\[
E_{-1}(Q):=\ker(Q+I)\subset L^2(\mu),
\]
where $Q$ is regarded as an operator on $L^2(\mu)$.  Thus
$E_{-1}(Q)$ is the eigenspace associated with the eigenvalue $-1$.

\begin{definition}
\label{def:TMV}
The Markov chain is said to have the \emph{$\mathrm{TMV}$ property} if
$$
\int_S\varphi^3\,d\mu=0
\qquad\text{for every }\varphi\in E_{-1}(Q).
$$
Equivalently, we say that the generator $Q$, or the Markov semigroup
$(P_t)_{t\ge0}$ generated by $Q$, has the TMV property.
\end{definition}

The TMV condition is introduced precisely because of the following
necessity statement: if
$t_{\mathrm{opt}}^{(Q)}(p,q)=\tau(p,q)$ for even one pair
$1<p<q<\infty$, then $Q$ is TMV.  This follows by expanding
$\|\one+\varepsilon\varphi\|_r$ at the constant function: at
$t=\tau(p,q)$ the quadratic terms agree, and comparison for both
signs of $\varepsilon$ forces
$\int_S\varphi^3\,d\mu=0$.

\begin{remark*}
The TMV condition is closely related to the perturbative obstruction in
Chen, Liu, and Saloff-Coste~\cite[Theorem~1.1]{ChenLiuSaloffCoste2008}:
there, a Taylor expansion at constant functions shows that  $\rho_{\mathrm{LS}}(Q)= \lambda(Q)$ implies that $Q$ is TMV.  An earlier example distinguishing
spectral and logarithmic Sobolev information appears in
Korzeniowski and Stroock~\cite{KorzeniowskiStroock1985}.
\end{remark*}

\begin{remark*}
For Bernoulli masses $\alpha$ and $1-\alpha$, with $0<\alpha<1$, the
third moment of a nonzero eigenfunction in $E_{-1}(Q)$ vanishes only
in the uniform case $\alpha=1/2$.  The sharp two-point inequality then yields equality in
\eqref{eq:spectral-lower-bound} for all exponent pairs in the uniform
case and for none in the biased case.  Hence no two-state Markov chain
has a hypercontractivity phase transition.  For the classical
foundations and later two-point results, see
\cite{Bonami1970,Nelson1973,Beckner1975,Wolff2007,
Oleszkiewicz2003}.
\end{remark*}

We now specialize the TMV condition to a three-state space.    Our first goal is to classify the possible third-moment
eigenfunctions and the corresponding reversible generators.

Throughout this section the state space is
\[
S=\{-1,0,1\},
\]
written in the coordinate order $(0,-1,1)$:  thus, a vector
$(x,y,z)^\top$ represents the function $f$ with
$f(0)=x$, $f(-1)=y$, and $f(1)=z$: 
$$
(x,y,z)^\top \longleftrightarrow   f(0)=x, \quad  f(-1)=y, \quad f(1)=z. 
$$
 This symmetric labeling is
convenient because the eigenfunction corresponding to the eigenvalue
$-1$ in our construction is the odd function
$e_1=(0,1,-1)^\top$.

For our purposes it is enough to consider invariant distributions
that are symmetric under the reflection $x\mapsto-x$ on $S$.  Thus, for
simplicity, we fix
$$
\nu_c(0)=1-c,
\qquad
\nu_c(-1)=\nu_c(1)= c/2,
\qquad 0<c<1.
$$

Our classification for the invariant measure $\nu_c$ proceeds in
three steps:
\begin{itemize}
\item First, we start with a finite prescribed-moment problem of solving
\begin{equation}\label{eq:three-state-moment-system}
\E_{\nu_c} h=0,
\qquad
\E_{\nu_c} h^3=0,
\end{equation}
and obtain a complete list, up to scaling and reflection, of the
possible functions $h$.
\item      Second, we restrict to the odd solution and classify the corresponding
reversible generators for which the odd solution $h$ spans the
eigenspace corresponding to the eigenvalue $-1$:
\[
E_{-1}(Q)=\operatorname{span}\{h\}.
\]
\item Third, starting from the constant eigenfunction $e_0=\one$ and the
chosen second eigenfunction $e_1=h$, we find a third vector $e_2$
orthogonal to both in $L^2(\nu_c)$.  Assigning the eigenvalues
$0,-1,-a$ to $e_0,e_1,e_2$ determines
\[
Q=\mathsf E\operatorname{diag}(0,-1,-a)\mathsf E^{-1},
\qquad
\mathsf E=[e_0,e_1,e_2],
\]
where $\mathsf E$ is the $3\times3$ matrix with columns
$e_0,e_1,e_2$,
and the requirement that the off-diagonal entries of $Q$ be
nonnegative gives exactly the admissible generators.
\end{itemize}

\begin{remark*}
Unlike the classical moment problem, in the finite prescribed-moment
problem \eqref{eq:three-state-moment-system} the measure $\nu_c$ is
fixed and the unknown is a random variable; see
\cite{Schmudgen2017} for general background.
\end{remark*}

\subsection{Moment classification}

\begin{proposition}
\label{prop:third-moment-functions}
Up to multiplication by a nonzero scalar, every nonzero function
$h$ satisfying
\[
\E_{\nu_c}h=0,
\qquad
\E_{\nu_c}h^3=0
\]
is of one of the following forms:
\begin{enumerate}
\item the odd solution
\[
h_{\mathrm{odd}}=(0,1,-1)^\top;
\]
\item if $c\ge1/2$, the two non-odd solutions
\[
h_{\mathrm{nonodd}}^{\pm}
=\left(
1,\frac{c-1\pm\sqrt{(2c-1)/3}}{c},
  \frac{c-1\mp\sqrt{(2c-1)/3}}{c}
\right)^\top.
\]
They are interchanged by reflection of the last two coordinates and
coincide when $c=1/2$.
\end{enumerate}
\end{proposition}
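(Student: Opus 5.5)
We solve the two moment equations directly, splitting on whether $h(0)$ vanishes; the only real work is the algebra in the second case. Write $h=(x,y,z)^\top$. With respect to $\nu_c$ the two conditions read
\[
(1-c)x+\tfrac c2(y+z)=0,\qquad (1-c)x^3+\tfrac c2(y^3+z^3)=0 .
\]

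\emph{Case $x=0$.} The first equation gives $z=-y$, and the second is then automatic. Hence $h$ is a multiple of $h_{\mathrm{odd}}=(0,1,-1)^\top$, which is form (1).

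\emph{Case $x\neq0$.} By homogeneity (both equations are homogeneous), we may rescale so that $x=1$. Put $s=y+z$ and $p=yz$. The first equation gives
\[
s=-\frac{2(1-c)}{c}.
\]
Using $y^3+z^3=s^3-3ps$, the second equation becomes $(1-c)+\tfrac c2(s^3-3ps)=0$. Since $s\neq0$, this is linear in $p$, so $p$ is uniquely determined. Explicitly, $\tfrac c2 s=-(1-c)$, so the equation reads
\[
(1-c)-(1-c)(s^2-3p)=0,
\]
that is, $s^2-3p=1$, or $p=(s^2-1)/3$.

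Then $y$ and $z$ are the two roots of $T^2-sT+p=0$. Its discriminant is
\[
s^2-4p=s^2-\tfrac43(s^2-1)=\tfrac{4-s^2}{3}.
\]
Real roots exist if and only if $s^2\le4$, i.e. $|s|\le2$, i.e. $(1-c)/c\le1$, i.e. $c\ge1/2$. In that range,
\[
y,z=\frac s2\pm\frac12\sqrt{\tfrac{4-s^2}{3}} .
\]

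To match the stated form, note $s/2=(c-1)/c$ and
\[
4-s^2=\frac{4(c^2-(1-c)^2)}{c^2}=\frac{4(2c-1)}{c^2},
\]
so
\[
\frac12\sqrt{\tfrac{4-s^2}{3}}=\frac{\sqrt{(2c-1)/3}}{c}.
\]
This gives exactly $h^\pm_{\mathrm{nonodd}}$. The two choices of sign differ by swapping $y$ and $z$, i.e. by the reflection of the last two coordinates. They coincide exactly when the discriminant vanishes, i.e. at $c=1/2$.

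Finally, for $c<1/2$ no non-odd real solution exists, so (1) is the only form. Every solution with $x\neq0$ is a scalar multiple of one with $x=1$, which completes the list up to scaling.
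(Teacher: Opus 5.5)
Your proof is correct and follows the paper's argument essentially line for line. You split on whether $h(0)=0$, normalize $h(0)=1$, use $y^3+z^3=s^3-3yz\,s$ with $s=2(c-1)/c$ to get $yz=(s^2-1)/3$, and read off when the roots are real from the discriminant $(4-s^2)/3$, which is nonnegative exactly when $c\ge 1/2$; your explicit simplification of the root to $\sqrt{(2c-1)/3}/c$ supplies the one computation the paper leaves to the reader.
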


\begin{proof}
Write $h=(h_0,h_-,h_+)^\top$.  If $h_0=0$, the mean-zero condition
gives $h_-+h_+=0$, hence the odd solution.

Suppose $h_0\ne0$ and normalize $h_0=1$.  Write $h_-=u$ and $h_+=v$.
The mean-zero condition is
\[
u+v=\delta:=2(c-1)/c.
\]
The cubic condition becomes
$
u^3+v^3=\delta.
$
Using $u^3+v^3=(u+v)^3-3uv(u+v)$ gives
\[
uv=(\delta^2-1)/3.
\]
Thus $u$ and $v$ are the roots of
\[
z^2-\delta z+(\delta^2-1)/3=0.
\]
The discriminant is $(4-\delta^2)/3$, so real solutions exist exactly
when $|\delta|\le2$, equivalently $c\ge1/2$.  Solving the quadratic
and substituting $\delta=2(c-1)/c$ gives
\[
u,v
=\frac{c-1\pm\sqrt{(2c-1)/3}}{c},
\]
which proves the stated formula.
\end{proof}

\begin{remark*}
 For an arbitrary probability measure of full support on three states,
the same procedure reduces the classification to a cubic equation
depending on all three masses. 
\end{remark*}

\subsection{Generator classification with
\texorpdfstring{$E_{-1}$}{E(-1)}  spanned by the odd function}

We now turn the moment classification with
\texorpdfstring{$E_{-1}$}{E(-1)}  spanned by the odd function into a classification of
generators. 

\begin{proposition}
\label{prop:general-three-state-family}
Let $0<c<1$ and consider any irreducible $\nu_c$-reversible Markov generator $Q$
with $\lambda(Q)=1$. Then the equality
\[
E_{-1}(Q)=\operatorname{span}\{h_{\mathrm{odd}}\}
\] holds
if and only if $Q=Q_{a,c}$ for a unique parameter
$1<a\le 1/(1-c)$, where
\begin{equation}\label{eq:Q-a-c}
Q_{a,c}
=
\begin{pmatrix}
-ac&ac/2&ac/2\\[6pt]
a(1-c)&-[1+a(1-c)]/2&
          [1-a(1-c)]/2\\[6pt]
a(1-c)&[1-a(1-c)]/2&
          -[1+a(1-c)]/2
\end{pmatrix}.
\end{equation}
Every generator in this family is TMV.
\end{proposition}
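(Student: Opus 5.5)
The plan follows the three-step scheme announced above: fix the eigenbasis, determine the third eigenvector by orthogonality, reconstruct $Q$ by spectral decomposition, and read off the admissible parameters from the sign conditions on the off-diagonal entries.

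\emph{Reduction to a spectral decomposition.} Suppose $Q$ is irreducible and $\nu_c$-reversible with $\lambda(Q)=1$ and $E_{-1}(Q)=\operatorname{span}\{h_{\mathrm{odd}}\}$. Then $Q$ is self-adjoint on $L^2(\nu_c)$, with kernel spanned by $e_0=\one$ by irreducibility. Since $E_{-1}$ is one-dimensional, the third eigenvalue is $-a$ with $a\neq 1$. Because $\lambda(Q)=1$ is the smallest nonzero eigenvalue of $-Q$, we get $a>1$. The corresponding eigenvector $e_2$ is orthogonal in $L^2(\nu_c)$ to both $\one$ and $h_{\mathrm{odd}}=(0,1,-1)^\top$. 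Orthogonality to $h_{\mathrm{odd}}$ forces $e_2(-1)=e_2(1)$. Orthogonality to $\one$ then gives $(1-c)x+cy=0$, so up to scaling
\[
e_2=(c,\,-(1-c),\,-(1-c))^\top .
\]
Hence $Q=\mathsf E\operatorname{diag}(0,-1,-a)\mathsf E^{-1}$ is uniquely determined by $a$. Equivalently, using $L^2(\nu_c)$-orthogonal projections,
\[
Qf=-\frac{\langle f,e_1\rangle}{\|e_1\|^2}e_1-a\frac{\langle f,e_2\rangle}{\|e_2\|^2}e_2 .
\]
The norms are $\|e_1\|^2=c$ and $\|e_2\|^2=c^2(1-c)+c(1-c)^2=c(1-c)$.

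\emph{Computing the matrix.} The entry $Q_{xy}$ equals the coefficient of $f(y)$, namely
\[
Q_{xy}=-\nu_c(y)\left[\frac{e_1(x)e_1(y)}{c}+a\,\frac{e_2(x)e_2(y)}{c(1-c)}\right].
\]
For example, $Q_{00}=-(1-c)\,a\,c^2/(c(1-c))=-ac$. Next, $Q_{0,\pm1}=-(c/2)\,a\,c(-(1-c))/(c(1-c))=ac/2$. Also $Q_{\pm1,0}=-(1-c)\,a(-(1-c))c/(c(1-c))=a(1-c)$. Finally,
\[
Q_{-1,1}=-\frac c2\left[-\frac1c+\frac{a(1-c)}{c}\right]=\frac{1-a(1-c)}{2}.
\]
The diagonal entries follow from the row sums being zero. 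This reproduces \eqref{eq:Q-a-c}.

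\emph{Admissibility and the converse.} A matrix of this form is a Markov generator if and only if its off-diagonal entries are nonnegative. The entries $ac/2$ and $a(1-c)$ are positive. The remaining condition $1-a(1-c)\ge0$ is exactly $a\le 1/(1-c)$. Irreducibility holds automatically, since $0$ communicates with $\pm1$.

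Conversely, for $1<a\le1/(1-c)$ the following hold by construction:
\begin{itemize}
\item $Q_{a,c}$ is $\nu_c$-symmetric, since it is built from orthogonal projections;
\item its spectrum is $\{0,-1,-a\}$, so $\lambda(Q_{a,c})=1$;
\item $E_{-1}(Q_{a,c})=\operatorname{span}\{h_{\mathrm{odd}}\}$, because $a\ne1$.
\end{itemize}
The parameter is unique because $a$ is recovered as the third eigenvalue.

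\emph{TMV.} Every $\varphi\in E_{-1}$ is a multiple of $h_{\mathrm{odd}}$. Since $h_{\mathrm{odd}}$ is odd and $\nu_c$ is symmetric,
\[
\E_{\nu_c}h_{\mathrm{odd}}^3=\frac c2\bigl(1-1\bigr)=0,
\]
so $Q_{a,c}$ is TMV.

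The only place requiring care is the entrywise computation, which is routine. The one conceptual point is justifying $a>1$ rather than $a\ge1$: the hypothesis $E_{-1}(Q)=\operatorname{span}\{h_{\mathrm{odd}}\}$ excludes $a=1$, while the normalization $\lambda(Q)=1$ excludes $a<1$.
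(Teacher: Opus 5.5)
Your proof is correct and follows the paper's argument: the third eigenvector is fixed by $L^2(\nu_c)$-orthogonality, $Q$ is reconstructed from the spectrum $\{0,-1,-a\}$ with $a>1$ forced by $\lambda(Q)=1$ and $\dim E_{-1}=1$, and $a\le 1/(1-c)$ is read off from the off-diagonal entry $[1-a(1-c)]/2\ge0$. The only differences are cosmetic: you compute the entries from the spectral projections rather than from $\mathsf E\operatorname{diag}(0,-1,-a)\mathsf E^{-1}$, with $e_2$ rescaled by $-(1-c)$, and you write out the converse and the TMV check, which the paper leaves as routine.
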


%
%\begin{figure}[htbp]
%\centering
%\begin{tikzpicture}[>=Latex,every node/.style={font=\small}]
%\coordinate (m) at (-3.5,0);
%\coordinate (z) at (0,3.15);
%\coordinate (p) at (3.5,0);
%\fill (m) circle (2.5pt);
%\fill (z) circle (2.5pt);
%\fill (p) circle (2.5pt);
%\node[below=0.18cm] at (m) {$-1$};
%\node[above=0.18cm] at (z) {$0$};
%\node[below=0.18cm] at (p) {$1$};
%
%\draw[shorten <=0.08cm,shorten >=0.08cm,
%      -{Stealth[length=1.8mm,width=1.2mm]},bend left=18]
%  (m) to (z);
%\draw[shorten <=0.08cm,shorten >=0.08cm,
%      -{Stealth[length=1.8mm,width=1.2mm]},bend left=18]
%  (z) to (m);
%\draw[shorten <=0.08cm,shorten >=0.08cm,
%      -{Stealth[length=1.8mm,width=1.2mm]},bend right=18]
%  (p) to (z);
%\draw[shorten <=0.08cm,shorten >=0.08cm,
%      -{Stealth[length=1.8mm,width=1.2mm]},bend right=18]
%  (z) to (p);
%\draw[shorten <=0.08cm,shorten >=0.08cm,
%      -{Stealth[length=1.8mm,width=1.2mm]},bend left=18]
%  (m) to (p);
%\draw[shorten <=0.08cm,shorten >=0.08cm,
%      -{Stealth[length=1.8mm,width=1.2mm]},bend left=18]
%  (p) to (m);
%
%\node at (-2.62,2.12) {$a(1-c)$};
%\node at (-1.64,1.55) {$ac/2$};
%\node at (2.62,2.12) {$a(1-c)$};
%\node at (1.55,1.55) {$ac/2$};
%\node at (0,0.32) {$[1-a(1-c)]/2$};
%\node at (0,-0.89) {$[1-a(1-c)]/2$};
%\end{tikzpicture}
%\caption{The Markov chain generated by $Q_{a,c}$.  The direct jumps
%between the outer states disappear on the boundary
%$a(1-c)=1$.}
%\label{fig:general-three-state-chain}
%\end{figure}

\begin{figure}[htbp]
\centering
\includegraphics[scale=1]{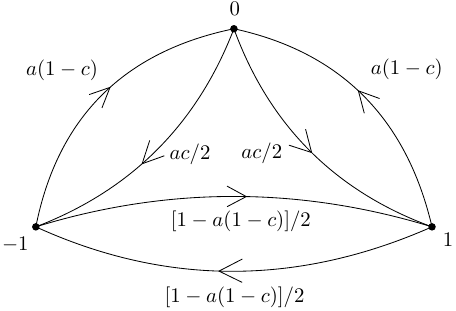}
\caption{The Markov chain generated by $Q_{a,c}$.  The direct jumps
between the outer states disappear on the boundary
$a(1-c)=1$.}
\label{fig:general-three-state-chain}
\end{figure}

Let us explain how the matrix \eqref{eq:Q-a-c} is found.  The
constant eigenfunction and the prescribed odd eigenfunction are
\begin{align}\label{e-01}
e_0=\one=(1,1,1)^\top,
\qquad
e_1=h_{\mathrm{odd}}=(0,1,-1)^\top.
\end{align}
To find the remaining eigenvector, write $e_2=(x,y,z)^\top$.  The
conditions $e_2\perp e_1$ and $e_2\perp e_0$ in $L^2(\nu_c)$ are
\[
\frac c2(y-z)=0,
\qquad
(1-c)x+\frac c2(y+z)=0.
\]
The first identity gives $y=z$.  Since an eigenvector may be
rescaled, we set $y=z=1$; the second identity then gives
$x=-c/(1-c)$.  Hence
\begin{align}\label{e-2}
e_2=(- c/(1-c),1,1)^\top.
\end{align}
The three vectors are mutually orthogonal in $L^2(\nu_c)$ and satisfy
\[
\|e_0\|_2^2=1,
\qquad
\|e_1\|_2^2=c,
\qquad
\|e_2\|_2^2=c/(1-c).
\]
We assign the eigenvalues $0,-1,-a$ to $e_0,e_1,e_2$, respectively.
The restriction $a>1$ is essential for the TMV property.  If $a=1$,
then
\[
E_{-1}(Q)=\operatorname{span}\{e_1,e_2\},
\]
and
$\E_{\nu_c}(e_1^2e_2)=c\ne0$; hence the cubic form
$h\mapsto\E_{\nu_c}h^3$ cannot vanish identically on
$E_{-1}(Q)$.

\begin{proof}[Proof of Proposition \ref{prop:general-three-state-family}]
Let $\mathsf E=[e_0,e_1,e_2]$ be the $3\times3$ matrix whose columns
are $e_0,e_1,e_2$.  Suppose first that $Q$ has the properties in the
statement.  Then $Q$ is self-adjoint on $L^2(\nu_c)$
and
\[
Qe_0=0,\qquad Qe_1=-e_1.
\]
Since 
$\operatorname{span}\{e_0,e_1\}^\perp = \mathbb{R} e_2$, there is $a>0$ such that $Qe_2=-ae_2$.  The normalization
$\lambda(Q)=1$ and the identity
$E_{-1}(Q)=\operatorname{span}\{e_1\}$ imply $a>1$.  Consequently,
\[
Q
=\mathsf E\operatorname{diag}(0,-1,-a)\mathsf E^{-1},
\]
and direct computation gives exactly \eqref{eq:Q-a-c}.  The
off-diagonal entries then imply $a\le1/(1-c)$.  The parameter $a$ is
unique because it is the remaining nonzero eigenvalue of $-Q$.

Conversely, let $1<a\le1/(1-c)$. Then it is easy to verify that   $Q_{a,c}$  satisfies all  the desired conditions.  This completes the proof.
\end{proof}

\section{Local phase transition}
\label{sec:defect}

 We
now pass from that classification to the hypercontractivity problem and we shall focus on the boundary case $a(1-c)=1$ of Proposition~\ref{prop:general-three-state-family} for  the TMV Markov generators.  

The main goal of this section is to prove that,  due to the sign-change of the discriminant of a quadratic polynomial, a local phase transition appears in the sharp hypercontractivity relation.

\subsection{The boundary  case of  TMV chains: simplified notation}
For the boundary case of Proposition~\ref{prop:general-three-state-family} for the TMV Markov generators, one has 
\[
c=1- 1/a,
\qquad a>1. 
\]
For notational simplicity, write
$$
Q_a:= Q_{a, c} = Q_{a, 1-1/a}, \qquad  \mu_a:=\nu_c = \nu_{1-1/a}.
$$

More explicitly, one has 
\begin{equation}\label{eq:Q-a}
Q_a= 
\begin{pmatrix}
-(a-1)&(a-1)/2&(a-1)/2\\[5pt]
1&-1&0\\
1&0&-1
\end{pmatrix},
\end{equation}
with invariant measure $\mu_a$ given by 
\begin{equation}\label{eq:mu-a}
\mu_a(0)=1/a,
\qquad
\mu_a(-1)=\mu_a(1)=(a-1)/(2a).
\end{equation}
By direct computation, the corresponding semigroup is
\begin{equation}\label{eq:P-a}
P_t^{(a)} 
= e^{t Q_a} = 
\begin{pmatrix}
\dfrac{1+(a-1)e^{-at}}a&
\dfrac{a-1}{2a}(1-e^{-at})&
\dfrac{a-1}{2a}(1-e^{-at})\\[8pt]
\dfrac{1-e^{-at}}a&
\dfrac{a-1+ae^{-t}+e^{-at}}{2a}&
\dfrac{a-1-ae^{-t}+e^{-at}}{2a}\\[8pt]
\dfrac{1-e^{-at}}a&
\dfrac{a-1-ae^{-t}+e^{-at}}{2a}&
\dfrac{a-1+ae^{-t}+e^{-at}}{2a}
\end{pmatrix}.
\end{equation}

\subsection{Definition of the defect function}
\label{sec:defi-defect}

Let $1<p<2$. 
Then the $(p,2)$-hypercontractivity problem at 
\[
t = \tau(p,2)=  \frac{1}{2}\log\frac{1}{p-1}
\] 
is equivalent to the problem of the non-negativity of the following functional
\[
f\mapsto \|f\|_p^2-\| P_{\tau(p,2)}^{(a)} f\|_2^2. 
\]
By standard argument (positivity $|P_tf|\le P_t|f|$, homogeneity, and continuity), one may consider only the functions $f$ of the form: 
\begin{align}\label{normal-form}
f= (1, \xi, \eta)^\top, \qquad \xi, \eta\ge 0. 
\end{align}

Therefore, by defining the defect function as follows
\[
\mathcal{D}_{a,p}(\xi,\eta)
:=\|(1, \xi, \eta)^\top \|_p^2-\|P_{\tau(p,2)}^{(a)} (1, \xi, \eta)^\top \|_2^2, \qquad \xi, \eta \ge 0, 
\]
one has 
\begin{align}\label{HC-def}
t_{\mathrm{opt}}^{(Q_a)} (p,2) = \tau(p,2) 
\quad\Longleftrightarrow\quad
\mathcal{D}_{a,p}(\xi,\eta)\ge0
\text{ for all }\xi,\eta\ge0.
\end{align}

\subsection{The local phase transition determined by a discriminant}

As usual, a necessary condition for hypercontractivity can be derived from small perturbations around the constant function. In the current situation,  the perturbation method gives rise to a local phase transition which is determined by a discriminant of a quadratic polynomial.

Recall that by \eqref{e-01} and \eqref{e-2}, the three eigenvectors (not necessarily normalized) of $Q_a$ are 
\[
e_0=\one=(1,1,1)^\top,
\qquad
e_1=(0,1,-1)^\top,  \qquad e_2=(- c/(1-c),1,1)^\top.
\]
Therefore,  a small perturbation of $\one = (1,1,1)^\top$ can be written as 
\[
e_0 + x e_1  + y e_2 = (1 - yc/(1-c), 1 + x + y, 1 -x +y)^\top. 
\]
Normalizing it to the form \eqref{normal-form} yields a small perturbation $(1, \xi, \eta)^\top$ of $\one$ with 
\[
\xi=1+w-v,
\qquad
\eta=1+w+v.
\]

\begin{lemma}
\label{lem:fourth-order-expansion}
As $(w,v)\to(0,0)$,
\begin{align}\label{eq:fourth-order-expansion}
\mathcal{D}_{a,p}(1+w-v,1+w+v)
=\frac{(a-1)(p-1)}{a^2}P_{a,p}(w,v^2) 
+o(w^2+v^4),
\end{align}
where $P_{a,p}$ is a quadratic form given by 
\begin{align}\label{quad-form}
P_{a,p}(X,Y) = (1-(p-1)^{a-1}) X^2-(2-p)X Y+ \frac{(2-p)(2ap-3p+3)}{12}Y^2.
\end{align}
\end{lemma}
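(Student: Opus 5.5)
We expand both terms of $\mathcal{D}_{a,p}$ directly in the eigenbasis of $Q_a$. Write $f=(1,\xi,\eta)^\top$ with $\xi=1+w-v$, $\eta=1+w+v$. In coordinates this is
\[
f=\one+\alpha e_1+\beta e_2 ,
\]
where $e_1=(0,1,-1)^\top$, $e_2=(-(a-1),1,1)^\top$ (since $c/(1-c)=a-1$), and $\beta=w/a$, $\alpha=-v$. One checks this directly: the constant part is $1+w/a$, but we may write $f$ as $\one\cdot(1+w/a)-\tfrac{w}{a}\cdots$. Concretely, solve $f=\gamma\one+\alpha e_1+\beta e_2$ to get $\gamma=1+\tfrac{(a-1)w}{a}$, $\beta=w/a$, $\alpha=-v$.

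Let $s=e^{-\tau}=\sqrt{p-1}$, so $e^{-a\tau}=(p-1)^{a/2}$. Then
\[
P_\tau f=\gamma\one+\alpha s e_1+\beta s^a e_2 .
\]
By orthogonality,
\[
\|P_\tau f\|_2^2=\gamma^2+\alpha^2 s^2\|e_1\|^2+\beta^2s^{2a}\|e_2\|^2 ,
\]
with $\|e_1\|^2=c=(a-1)/a$ and $\|e_2\|^2=a-1$. This expression is exact and polynomial in $(w,v)$, so nothing needs to be expanded on the semigroup side.

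The real work is in expanding
\[
\|f\|_p^2=\Bigl(\tfrac1a+\tfrac{a-1}{2a}(\xi^p+\eta^p)\Bigr)^{2/p}.
\]
Set $\xi^p+\eta^p=2\,\E_\pm(1+w\pm v)^p$. Expanding $(1+u)^p$ to fourth order in $u=w\pm v$ and averaging over the sign kills the odd powers of $v$. We keep only the terms of order $w$, $w^2$, $wv^2$, $v^2$ and $v^4$, since every other monomial is $o(w^2+v^4)$ under the scaling $w\sim v^2$. The terms $w^3$, $w^2v^2$ and $wv^4$ are all negligible. The relevant inner mean is
\[
M=1+\tfrac{a-1}{a}\Bigl[pw+\tbinom p2(w^2+v^2)+3\tbinom p3 wv^2+\tbinom p4 v^4\Bigr]+\dots
\]
We then apply $(1+m)^{2/p}=1+\tfrac2p m+\tfrac{2-p}{p^2}m^2+\tfrac{(2-p)(2-2p)}{3p^3}m^3+\dots$. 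Here $m$ is of order $w+v^2$, so $m^2$ contributes terms in $w^2$, $wv^2$ and $v^4$, while $m^3$ is negligible.

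Next we subtract the two expansions. The linear terms must cancel: on the $\|f\|_p^2$ side the $w$ coefficient is $2(a-1)/a$, and on the semigroup side it is $2\gamma'=2(a-1)/a$. The $v^2$ terms must also cancel, because $(p-1)\tfrac{a-1}{a}$ equals $s^2\|e_1\|^2$; this is exactly the choice $t=\tau(p,2)$. What remains is a quadratic form in $(w,v^2)$. We collect its three coefficients and factor out $(a-1)(p-1)/a^2$:

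\emph{The $w^2$ coefficient.} This combines the $m^2$ term, the $\binom p2 w^2$ term, $\gamma^2$, and $\beta^2 s^{2a}(a-1)=(a-1)(p-1)^a w^2/a^2$. It should reduce to $\tfrac{(a-1)(p-1)}{a^2}\bigl(1-(p-1)^{a-1}\bigr)$.

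\emph{The $wv^2$ coefficient.} This should give $-(2-p)$ after normalization.

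\emph{The $v^4$ coefficient.} This combines $\binom p4$ with the square of the $\binom p2 v^2$ term, and should produce $(2-p)(2ap-3p+3)/12$.

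A consistency check is available. The coefficient of $v^3$ vanishes automatically by symmetry, which is exactly the TMV property. Without it the expansion would contain a $v^3$ term and the scaling $w\sim v^2$ would be meaningless.

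The main obstacle is the bookkeeping in this last step. The difficulty is not conceptual, but the $v^4$ coefficient mixes the fourth-order Taylor term with the second-order cross term of $(1+m)^{2/p}$, and sign and factor errors are easy to make there. I would organize it by writing $M=1+\tfrac{a-1}{a}(A_1w+A_2v^2+B_{11}w^2+B_{12}wv^2+B_{22}v^4)$ symbolically. I would then simplify each coefficient separately against the target, checking at $p\to2$ where everything except the $w^2$ term should vanish. The remainder estimate $o(w^2+v^4)$ follows because all neglected monomials $w^av^{2b}$ with $a+b\ge3$, as well as odd powers absent by symmetry, are $O\bigl((|w|+v^2)^3\bigr)$, and the expansions are analytic near $(0,0)$.
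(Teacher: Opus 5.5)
Your proposal is correct and follows the paper's own proof: the paper writes $\mathcal{D}_{a,p}$ in closed form and Taylor expands. Its expression for $\|P^{(a)}_{\tau(p,2)}f\|_2^2$ is exactly your orthogonal decomposition with $\gamma=1+(a-1)w/a$, $\beta=w/a$, $\alpha=-v$, and the three coefficients you left as ``should'' do come out as $\frac{(a-1)(p-1)}{a^2}$ times $1-(p-1)^{a-1}$, $-(2-p)$ and $\frac{(2-p)(2ap-3p+3)}{12}$ (for instance, the $v^4$ coefficient is $\frac{(a-1)(2-p)(p-1)}{12a}\bigl[(3-p)+3(p-1)\frac{a-1}{a}\bigr]$). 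Two harmless slips: your first display should read $f=\gamma\one+\alpha e_1+\beta e_2$, not $\one+\alpha e_1+\beta e_2$; and in your $p\to2$ sanity check the $w^2$ coefficient also vanishes, since $\mathcal{D}_{a,2}\equiv0$.
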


\begin{proof}
A direct computation gives 
\begin{align*}
\mathcal{D}_{a,p}(\xi,\eta)
={}&
\left(\frac{2+(a-1)(\xi^p+\eta^p)}{2a}\right)^{2/p}
-
\left(\frac{2+(a-1)(\xi+\eta)}{2a}\right)^2
\\
&-\frac{a-1}{4a}(p-1)(\xi-\eta)^2
-\frac{a-1}{4a^2}(p-1)^a(\xi+\eta-2)^2.
\end{align*}
Expanding $\mathcal{D}_{a,p}(1+w-v,1+w+v)$ at $(w,v)=(0,0)$ gives 
\eqref{eq:fourth-order-expansion}.
\end{proof}

The discriminant of the quadratic form $P_{a,p}$ defined in \eqref{quad-form} is given by 
\begin{equation}
\label{eq:Delta}
\begin{split}
\Delta(a,p)
&=(2-p)^2 - \frac{\bigl(1-(p-1)^{a-1}\bigr)(2-p)(2ap-3p+3)}{3} \\
&= \frac{(2-p)(2ap-3p+3)}{3}
\left(
(p-1)^{a-1}
-\frac{2ap-3}{2ap-3p+3}
\right).
\end{split}
\end{equation}

\begin{proposition}
\label{prop:local-criterion}
Let $a>1$ and $1<p<2$.
\begin{enumerate}
\item If $\Delta(a,p)>0$, then 
\begin{align}\label{D-neg}
\inf_{(\xi, \eta)\in [0, \infty)^2}\mathcal{D}_{a,p} (\xi, \eta)<0. 
\end{align}
\item If $\Delta(a,p)<0$, then there is a neighborhood $U$ of $(1,1)$ such that for all $(\xi, \eta)\in U\setminus\{(1,1)\}$, 
\[
\mathcal{D}_{a,p}(\xi,\eta)>0.
\]
\end{enumerate}
\end{proposition}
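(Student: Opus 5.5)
The plan is to read off both statements from the anisotropic expansion in Lemma~\ref{lem:fourth-order-expansion}, with $w$ counted as order two and $v$ as order one. The relevant parameter ranges are $a>1$ and $1<p<2$. In this range the prefactor $(a-1)(p-1)/a^2$ is positive, so the sign of $\mathcal{D}_{a,p}$ near $(1,1)$ is governed by the quadratic form $P_{a,p}(X,Y)$ evaluated at $X=w$, $Y=v^2$.

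We first record the signs of the coefficients of $P_{a,p}$. The coefficient of $Y^2$ is $\frac{(2-p)(2ap-3p+3)}{12}$. Since $2ap-3p+3>2p-3p+3=3-p>0$, this coefficient is strictly positive. The coefficient of $X^2$, namely $1-(p-1)^{a-1}$, is also strictly positive, because $0<p-1<1$ and $a-1>0$.

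For part (1), suppose $\Delta(a,p)>0$. Then the form $P_{a,p}$ is indefinite. Concretely, consider $P_{a,p}(X,1)$ as a quadratic polynomial in $X$; by assumption it has two distinct real roots. Since the leading coefficient $1-(p-1)^{a-1}$ is positive and the cross term is $-(2-p)XY$, both roots are positive. Hence there is some $X_0>0$ with $P_{a,p}(X_0,1)=-\kappa<0$.
\begin{itemize}
\item[(i)] Take the curve $w=X_0 s^2$, $v=s$ with $s\to0^+$. By homogeneity, $P_{a,p}(w,v^2)=s^4P_{a,p}(X_0,1)=-\kappa s^4$.
\item[(ii)] The error term satisfies $o(w^2+v^4)=o\bigl((X_0^2+1)s^4\bigr)=o(s^4)$.
\item[(iii)] Therefore $\mathcal{D}_{a,p}(1+w-v,1+w+v)<0$ for all small $s>0$.
\end{itemize}
The corresponding points $(\xi,\eta)=(1+w-v,\,1+w+v)$ lie in $[0,\infty)^2$ for small $s$, which proves \eqref{D-neg}.

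For part (2), suppose $\Delta(a,p)<0$. Then $P_{a,p}$ is positive definite, since its leading coefficients are positive and its discriminant is negative. Hence there is $m>0$ with $P_{a,p}(X,Y)\ge m(X^2+Y^2)$ for all $(X,Y)$. The expansion then gives
\[
\mathcal{D}_{a,p}\ge \frac{(a-1)(p-1)}{a^2}\, m\,(w^2+v^4)+o(w^2+v^4).
\]
Because $o(w^2+v^4)$ means an error bounded by $\epsilon(w,v)(w^2+v^4)$ with $\epsilon\to0$, the right-hand side is strictly positive for $(w,v)\neq(0,0)$ small. The map $(w,v)\mapsto(\xi,\eta)$ is a linear bijection onto a neighborhood of $(1,1)$, so we can choose $U$ to be the image of a small ball around $(w,v)=(0,0)$. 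This gives $\mathcal{D}_{a,p}>0$ on $U\setminus\{(1,1)\}$.

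The main obstacle is justifying the remainder in the form $o(w^2+v^4)$, which is weaker than an ordinary Taylor remainder. This rests on the Taylor expansion in Lemma~\ref{lem:fourth-order-expansion} having no surviving lower-order terms. Concretely, one must check that in the Taylor series of the smooth function $\mathcal{D}_{a,p}$ at $(0,0)$:
\begin{itemize}
\item[(a)] all monomials $w^iv^j$ with weighted degree $2i+j\le 3$ vanish;
\item[(b)] every surviving monomial other than $w^2$, $wv^2$, $v^4$ is $o(w^2+v^4)$.
\end{itemize}
Point (a) uses the TMV property and the fact that the $w^2$ coefficient degenerates at $t=\tau(p,2)$ to exactly $1-(p-1)^{a-1}$. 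For (b), terms such as $w^2v$, $wv^3$, $v^5$ and $w^3$ are controlled by $|w|^{\alpha}|v|^{\beta}\le (w^2+v^4)^{\alpha/2+\beta/4}$. For example, $|w^2v|\le (w^2+v^4)^{5/4}$.

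Since the lemma already asserts the expansion with this remainder, I will take it as given. The rest of the argument is then the elementary analysis of quadratic forms described above.
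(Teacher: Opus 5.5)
Your proposal is correct and is essentially the paper's proof: you use positivity of $(a-1)(p-1)/a^2$ and of $1-(p-1)^{a-1}$, the anisotropic curve $w=x_0\varepsilon^2$, $v=\varepsilon$ with $P_{a,p}(x_0,1)<0$ when $\Delta(a,p)>0$, and the coercive bound $P_{a,p}(X,Y)\ge c_{a,p}(X^2+Y^2)$ at $(X,Y)=(w,v^2)$ when $\Delta(a,p)<0$. One harmless slip in your side remark: among monomials of weighted degree at most $3$, the one killed by $t=\tau(p,2)$ is $v^2$ (the $w^2$ coefficient $1-(p-1)^{a-1}$ is a retained degree-$4$ term), while $w$, $v$, $wv$, $v^3$ vanish by invariance of $\mu_a$ and the symmetry $\xi\leftrightarrow\eta$; since you take Lemma~\ref{lem:fourth-order-expansion} as given, this does not affect the argument.
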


\begin{proof}
Clearly, under the assumption  $a>1$ and $1<p<2$, one has 
\[
\frac{(a-1)(p-1)}{a^2}>0, \quad  
1-(p-1)^{a-1}>0. 
\]

Consequently, if $\Delta(a,p)>0$, 
then for the quadratic polynomial $P_{a,p}(X,1)$ in $X$, there exists $x_0\in \mathbb{R}$ such that $P_{a,p}(x_0,1) <0$. 
 Thus, taking $w = x_0 \varepsilon^2, v = \varepsilon$  in
\eqref{eq:fourth-order-expansion}, one obtains 
\begin{align*}\mathcal{D}_{a,p}(1 + x_0 \varepsilon^2 - \varepsilon, 1+ x_0\varepsilon^2 +\varepsilon) 
=  \frac{(a-1)(p-1)}{a^2} P_{a,p}(x_0,1) \varepsilon^4
+o( \varepsilon^4), 
\end{align*}
which implies the desired inequality \eqref{D-neg}.

If $\Delta(a,p)<0$, then the quadratic form $P_{a,p}(X,Y)$ is positive definite. Hence there exists a constant $c_{a,p}>0$ such that 
$$\text{$P_{a,p}(X,Y)\geq c_{a,p} (X^2+Y^2)$ for all $X,Y\in\mathbb{R}$.}$$
Taking $X=w$ and $Y=v^2$, we obtain 
\[
P_{a,p}(w,v^2) \geq c_{a,p}(w^2+v^4). 
\]
The assertion then follows. 
\end{proof}

\begin{corollary}
\label{cor-non-classical}
Let $a>1$ and $1<p<2$. Then the following implication holds: 
$$
\Delta(a,p)>0 \Longrightarrow t_{\mathrm{opt}}^{(Q_a)}(p,2)>\tau(p,2).
$$
\end{corollary}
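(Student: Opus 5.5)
The corollary follows by combining Proposition~\ref{prop:local-criterion}(1) with the reduction \eqref{HC-def}, plus one small extra step. A negative defect only tells us that $P_{\tau(p,2)}^{(a)}$ is not a contraction from $L^p(\mu_a)$ to $L^2(\mu_a)$. We still have to rule out the possibility that the infimum defining $t_{\mathrm{opt}}^{(Q_a)}(p,2)$ equals $\tau(p,2)$ while the norm bound fails exactly at that time.

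\emph{Step 1.} Assume $\Delta(a,p)>0$. By Proposition~\ref{prop:local-criterion}(1) there is a point $(\xi_0,\eta_0)\in[0,\infty)^2$ with $\mathcal D_{a,p}(\xi_0,\eta_0)<0$. Put $f_0=(1,\xi_0,\eta_0)^\top$. Then
\[
\|P^{(a)}_{\tau(p,2)}f_0\|_2>\|f_0\|_p ,
\]
so $\|P^{(a)}_{\tau(p,2)}\|_{L^p\to L^2}>1$.

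\emph{Step 2.} We use continuity in $t$. On the finite state space, $t\mapsto P^{(a)}_t f_0$ is continuous by the explicit formula \eqref{eq:P-a}. Hence $t\mapsto \|P^{(a)}_tf_0\|_2$ is continuous, and the strict inequality persists for all $t\in[\tau(p,2),\tau(p,2)+\delta)$ for some $\delta>0$.

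\emph{Step 3.} We use monotonicity in $t$. For every $s\ge0$, the operator $P^{(a)}_s$ is a contraction on $L^p(\mu_a)$, being Markov with invariant measure $\mu_a$. Consequently, if $\|P^{(a)}_t\|_{L^p\to L^2}\le1$ at some time $t$, then
\[
\|P^{(a)}_{t+s}\|_{L^p\to L^2}\le \|P^{(a)}_t\|_{L^p\to L^2}\,\|P^{(a)}_s\|_{L^p\to L^p}\le 1 .
\]
So the set of admissible times is an up-interval $[t_{\mathrm{opt}},\infty)$, closed because the norm is continuous in $t$.

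\emph{Step 4.} By \eqref{eq:spectral-lower-bound} we already know $t_{\mathrm{opt}}\ge\tau(p,2)$. Steps 2 and 3 show that no $t\in[\tau(p,2),\tau(p,2)+\delta)$ is admissible. Therefore $t_{\mathrm{opt}}^{(Q_a)}(p,2)\ge\tau(p,2)+\delta>\tau(p,2)$.

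There is no real obstacle here: the content lies in Proposition~\ref{prop:local-criterion}. The only step that needs a little care is Step 3, namely that the admissible set is an interval. This is what makes $t_{\mathrm{opt}}=\tau(p,2)$ equivalent to admissibility at time $\tau(p,2)$, as \eqref{HC-def} asserts.
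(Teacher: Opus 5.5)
Your proof is correct and follows the paper's route: the paper's one-line proof combines the lower bound \eqref{eq:spectral-lower-bound}, the equivalence \eqref{HC-def}, and Proposition~\ref{prop:local-criterion}(1), and your Steps 2--4 spell out the continuity-in-$t$ argument that \eqref{HC-def} takes for granted. Step 3 is not actually needed: Step 2 together with the lower bound already excludes every admissible time below $\tau(p,2)+\delta$, and what underlies \eqref{HC-def} is continuity (closedness of the admissible set), not its up-interval structure.
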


\begin{proof}
The proof follows from the universal lower bound \eqref{eq:spectral-lower-bound},  the equivalence relation  \eqref{HC-def} and the item (1) of Proposition~\ref{prop:local-criterion} 
\end{proof}

\subsection{No phase transition for $1<a\le 3/2$}

\begin{proposition}
\label{prop-nopase-3/2}
Let $1<a\le 3/2$ and $1<p<q<\infty$. Then 
\begin{equation}
\label{eq:strict-from-Delta-pq}
t_{\mathrm{opt}}^{(Q_a)}(p,q)>\tau(p,q).
\end{equation}
\end{proposition}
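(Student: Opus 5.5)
The plan is to run the perturbative argument of Lemma~\ref{lem:fourth-order-expansion} and Proposition~\ref{prop:local-criterion} for a general exponent pair $(p,q)$, not just for $q=2$. The aim is to show that for $1<a\le 3/2$ a suitable fourth-order perturbation of $\one$ always violates the inequality at $t=\tau(p,q)$. Combined with the universal lower bound \eqref{eq:spectral-lower-bound}, this gives the strict inequality \eqref{eq:strict-from-Delta-pq}.

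\emph{Step 1: the general defect and its quartic expansion.} By positivity and homogeneity as in Section~\ref{sec:defi-defect}, I would set
\[
\mathcal D_{a,p,q}(\xi,\eta):=\|(1,\xi,\eta)^\top\|_p-\|P^{(a)}_{\tau(p,q)}(1,\xi,\eta)^\top\|_q .
\]
Then $t^{(Q_a)}_{\mathrm{opt}}(p,q)=\tau(p,q)$ forces $\mathcal D_{a,p,q}\ge0$ on $[0,\infty)^2$. I would substitute $\xi=1+w-v$, $\eta=1+w+v$, using the explicit semigroup \eqref{eq:P-a}, and expand in the weighted scaling $w=O(\varepsilon^2)$, $v=O(\varepsilon)$. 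The expected cancellations are these:
\begin{itemize}
\item The quadratic terms in $v$ vanish, because $v$ moves along $e_1$ and $t=\tau(p,q)$ is exactly the spectral threshold.
\item The cubic terms in $v$ vanish, by the TMV property (Proposition~\ref{prop:general-three-state-family}) and the reflection symmetry $v\mapsto -v$.
\end{itemize}
What remains should be $C\,P_{a,p,q}(w,v^2)+o(w^2+v^4)$ with $C>0$, where
\[
P_{a,p,q}(X,Y)=A X^2-BXY+CY^2 .
\]
Here $A$ is a positive multiple of $1-\bigl(\tfrac{p-1}{q-1}\bigr)^{a-1}$, which is positive since $a>1$ and $p<q$. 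The coefficients $B$ and $C$ come from the fourth-order terms of $\|\cdot\|_p$ and $\|\cdot\|_q$ along $e_1$, together with the $e_2$-component generated by $v^2$ (note $e_1^2$ has a nonzero $e_2$-component). For $q=2$ this should reduce to \eqref{quad-form}, which serves as a consistency check.

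\emph{Step 2: local obstruction.} Exactly as in item~(1) of Proposition~\ref{prop:local-criterion}, if the discriminant $\Delta(a,p,q)=B^2-4AC$ is positive, I choose $x_0$ with $P_{a,p,q}(x_0,1)<0$. Taking $w=x_0\varepsilon^2$, $v=\varepsilon$ then gives $\mathcal D_{a,p,q}<0$ for small $\varepsilon$, hence strict inequality.

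\emph{Step 3: sign of the discriminant for $a\le 3/2$ (main obstacle).} It remains to prove $\Delta(a,p,q)>0$ for all $1<a\le3/2$ and $1<p<q<\infty$.

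First I would do the case $q=2$ from \eqref{eq:Delta}. For $a\le 3/2$ and $1<p<2$ we have $2ap-3\le 3p-3$, so
\[
\frac{2ap-3}{2ap-3p+3}\le \frac{3(p-1)}{3}=p-1 \quad\text{(when the numerator is nonnegative)}.
\]
Moreover $(p-1)^{a-1}>p-1$ for $a<2$. Hence the bracket in \eqref{eq:Delta} is positive; it is trivially positive when $2ap-3<0$. So $\Delta(a,p)>0$.

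For general $(p,q)$ I would try to factor $\Delta(a,p,q)$ in the same way, as a positive prefactor times $\bigl(\tfrac{p-1}{q-1}\bigr)^{a-1}-R(a,p,q)$ with $R$ rational. I would then use monotonicity in $a$: the power term increases as $a$ decreases while $R$ decreases. This reduces the claim to the endpoint $a=3/2$, where one needs an elementary inequality comparing $\sqrt{(p-1)/(q-1)}$ with a rational expression in $p,q$.

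The hard part is the explicit computation of $B$ and $C$ for general $q$ and the resulting two-variable inequality. If it resists, a fallback is to reduce to $q=2$ using duality: the pair $(q',p')$ has the same $\tau$ by self-adjointness. One could combine this with Riesz–Thorin interpolation along the exponent curve, though I expect the direct discriminant computation to be the cleaner route.
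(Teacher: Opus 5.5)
Your Steps 1--2 follow the paper's own strategy: expand the $(p,q)$-defect at $(1,1)$ with $w\sim\varepsilon^2$, $v\sim\varepsilon$, obtain a quadratic form in $(w,v^2)$, and use a negative direction when its discriminant is positive. Your $q=2$ sign argument is also fine, but the displayed step needs one more justification. Since $2ap-3p+3=3-(3-2a)p\le3$ for $a\le3/2$, bounding the numerator alone is not enough. Use instead that $x\mapsto(x-3)/(x-3p+3)$ is increasing for $p<2$, or the identity $(p-1)(2ap-3p+3)-(2ap-3)=p(3-2a)(2-p)\ge0$.

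\textbf{The gap is Step~3 for general $(p,q)$.} This is the actual content of the proposition, and it is left open: you never compute $B$ and $C$, and the route you sketch cannot work as described. For $q\neq2$, the cubic term of $\|P_tf\|_q^q$ contributes to the $wv^2$-coefficient the mixed moment $\E_{\mu_a}[e_1^2e_2]\neq0$, weighted by $e^{-2t}e^{-at}=r^2r^a$, where $r=e^{-\tau(p,q)}=((p-1)/(q-1))^{1/2}$. For $q=2$ this term is absent because $\|\cdot\|_2^2$ has no cubic part. Up to a positive factor, the correct form is
\[
P_{a,p,q}(X,Y)=(1-r^{2a-2})X^2+\bigl(p-2-(q-2)r^a\bigr)XY+\frac{q-p}{12}\Bigl((2a-3)(p-1)+\frac{2a}{q-1}\Bigr)Y^2 .
\]
Its discriminant involves $r^a$, $r^{2a}$ and $r^{2a-2}$. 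So it is not a positive prefactor times $\bigl(r^{2a-2}-R\bigr)$ with $R$ rational. Monotonicity in $a$ is not automatic either: for $q<2$ the squared cross coefficient increases with $a$. Your fallback does not close the gap. Duality only sends $(p,2)$ to $(2,p')$, and interpolation or $P_{s+t}=P_sP_t$ give upper bounds on norms, so they transfer contractivity, not its failure.

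What is missing is a direct lower bound for the discriminant, which goes as follows.
\begin{itemize}
\item Drop the term $-\frac{q-p}{3}(2a-3)(p-1)(1-r^{2a-2})$, which is $\ge0$ because $a\le3/2$.
\item Substitute $p=1+(q-1)r^2$. Then $-(p-2-(q-2)r^a)=(1-r^a)+(q-1)(r^a-r^2)$, with both summands positive since $a<2$.
\item Note that $\frac{q-p}{3}\cdot\frac{2a}{q-1}=\frac{2a}{3}(1-r^2)\le 1-r^2$.
\end{itemize}
Discarding the positive $(q-1)$-terms of the square then gives
\[
\Delta(a,p,q)>(1-r^a)^2-(1-r^2)(1-r^{2a-2})=(r-r^{a-1})^2>0,
\]
which is exactly what your Step~3 needs.
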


\begin{proof}

For $\xi,\eta\ge0$, define
\[
\mathcal{D}_{a,p,q}(\xi,\eta)
:=
\bigl\|(1,\xi,\eta)^\top\bigr\|_p^q
-
\bigl\|P_{\tau(p,q)}^{(a)}
      (1,\xi,\eta)^\top\bigr\|_q^q.
\]
We take $q$-th powers here to maintain consistency with the definition of $\mathcal{D}_{a,p}$ above.
Put 
\[
r:=e^{-\tau(p,q)}
=\left(\frac{p-1}{q-1}\right)^{1/2}.
\]
By \eqref{eq:mu-a} and \eqref{eq:P-a}, direct computation gives 
\begin{align*}
\mathcal{D}_{a,p,q}(\xi,\eta)
={}&
\left(
\frac{2+(a-1)(\xi^p+\eta^p)}{2a}
\right)^{q/p}
-
\frac1a
\left(
\frac{1+(a-1)r^a}{a}
+\frac{(a-1)(1-r^a)}{2a}(\xi+\eta)
\right)^q\\
&
-\frac{a-1}{2a}
\left(
\frac{1-r^a}{a}
+\frac{a-1+ar+r^a}{2a}\xi
+\frac{a-1-ar+r^a}{2a}\eta
\right)^q\\
&
-\frac{a-1}{2a}
\left(
\frac{1-r^a}{a}
+\frac{a-1-ar+r^a}{2a}\xi
+\frac{a-1+ar+r^a}{2a}\eta
\right)^q.
\end{align*}
Write
\[
\xi=1+w-v,
\qquad
\eta=1+w+v.
\]
A Taylor expansion at $(w,v)=(0,0)$ gives
\begin{align}
\label{eq:general-pq-fourth-order-expansion}
\mathcal{D}_{a,p,q}(1+w-v,1+w+v)
=\frac{q(a-1)(p-1)}{2a^2} P_{a,p,q}(w,v^2)
+o(w^2+v^4), 
\end{align}
where 
\begin{equation}
\label{eq:P-apq}
P_{a,p,q}(X,Y) := (1-r^{2a-2}) X^2+(p-2-(q-2)r^a)X Y+ \frac{q-p}{12}\left((2a-3)(p-1)+\frac{2a}{q-1}\right)Y^2.
\end{equation}
By Lemma~\ref{lemma-dis-posi}, when $1<a\le 3/2$, the discriminant of the quadratic polynomial $P_{a,p,q}(X,1)$ in $X$ is positive. 
Moreover, since $1-r^{2(a-1)}>0$, 
there exists $x_0\in \mathbb{R}$ such that $P_{a,p,q}(x_0,1) <0$. 
Taking $w = x_0 \varepsilon^2, v = \varepsilon$ in
\eqref{eq:general-pq-fourth-order-expansion} yields 
\begin{align*}
\mathcal{D}_{a,p,q}(1 + x_0 \varepsilon^2 - \varepsilon, 1+ x_0\varepsilon^2 +\varepsilon) 
=  \frac{q(a-1)(p-1)}{2a^2} P_{a,p,q}(x_0,1) \varepsilon^4
+o( \varepsilon^4),  
\end{align*}
which implies 
\[
\mathcal{D}_{a,p,q}(1 + x_0 \varepsilon^2 - \varepsilon, 1+ x_0\varepsilon^2 +\varepsilon) <0
\] for all sufficiently small nonzero $\varepsilon$. 
Thus the hypercontractive inequality fails at
$t=\tau(p,q)$, and consequently \eqref{eq:strict-from-Delta-pq} holds. 
\end{proof}

We now prove that the discriminant is strictly positive when
\[
1<a\le\frac32.
\]

\begin{lemma}
\label{lemma-dis-posi}
Let $1<p<q$, $1<a\leq 3/2$, and $r=[(p-1)/(q-1)]^{1/2}$. 
Then the discriminant 
$$
\Delta(a,p,q) := \Big((p-2)-(q-2)r^a\Big)^2
-\frac{q-p}{3}
\left(
(2a-3)(p-1)+\frac{2a}{q-1}
\right)(1-r^{2a-2})
$$ of the quadratic form $P_{a,p,q}(X,Y)$ in \eqref{eq:P-apq} is positive.
\end{lemma}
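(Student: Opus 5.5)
The plan is to reparametrize so that $\Delta(a,p,q)$ becomes a quadratic polynomial in a single variable, and then check its three coefficients separately. Put $s:=p-1>0$ and keep $r=[(p-1)/(q-1)]^{1/2}\in(0,1)$. Then
\[
q-1=s/r^2,\qquad q-p=s(1-r^2)/r^2 .
\]
Substituting, the linear expression becomes
\[
(p-2)-(q-2)r^a=(s-1)-(s/r^2-1)r^a=As+B,\qquad A:=1-r^{a-2},\quad B:=r^a-1 .
\]
Since $0<r<1$ and $a<2$, we have $A<0$ and $B<0$. The subtracted term becomes
\[
\frac{(1-r^2)(1-r^{2a-2})}{3}\Bigl(\frac{(2a-3)s^2}{r^2}+2a\Bigr).
\]
Hence
\[
\Delta(a,p,q)=\Bigl(A^2-\tfrac{(2a-3)(1-r^2)(1-r^{2a-2})}{3r^2}\Bigr)s^2+2ABs+\Bigl(B^2-\tfrac{2a}{3}(1-r^2)(1-r^{2a-2})\Bigr),
\]
a quadratic in $s>0$ for each fixed $r\in(0,1)$.

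I will show that all three coefficients are nonnegative and that the linear one is strictly positive. This gives $\Delta>0$ for every $s>0$, that is, for every admissible pair $1<p<q$.

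\emph{Quadratic coefficient.} Since $a\le 3/2$, we have $2a-3\le0$. Together with $1-r^2>0$ and $1-r^{2a-2}>0$ (as $a>1$), the subtracted part is $\le 0$. So the coefficient is at least $A^2\ge0$.

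\emph{Linear coefficient.} $2AB>0$ because $A$ and $B$ are both negative. This term supplies the strict inequality.

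\emph{Constant coefficient.} This is the main step. I need
\[
(1-r^a)^2\ \ge\ \tfrac{2a}{3}(1-r^2)(1-r^{2a-2}),\qquad 0<r<1,\ 1<a\le 3/2 .
\]
I would first prove the stronger, $a$-free inequality $(1-r^2)(1-r^{2a-2})\le(1-r^a)^2$. Note that $2$ and $2a-2$ have midpoint $a$. So it suffices that $g(\alpha):=\log(1-r^\alpha)$ is concave in $\alpha>0$, after which midpoint concavity gives $g(2)+g(2a-2)\le 2g(a)$. To see the concavity, write $L=-\log r>0$ and $u=r^\alpha$. Then $g'(\alpha)=L\,u/(1-u)$. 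As $\alpha$ increases, $u$ decreases, so $u/(1-u)$ decreases; thus $g'$ is decreasing and $g$ is concave.

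Combining this with $2a/3\le1$ (which is exactly the hypothesis $a\le 3/2$) gives the constant-coefficient bound. As a consistency check, letting $r\to1$ the constant-term inequality reduces to $a\le 8/5$, so $a\le 3/2$ is comfortably inside the range where it holds.

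All three coefficients are therefore $\ge0$ with the linear one $>0$, so $\Delta(a,p,q)>0$ for all $s>0$, which is the claim. The only nontrivial ingredient is the log-concavity step; everything else is sign bookkeeping once the substitution is made.
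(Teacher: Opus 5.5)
Your proposal is correct and is essentially the paper's argument. The paper also eliminates one exponent through $r$ and views $\Delta$ as a quadratic, in $q-1$ rather than in your $s=p-1=(q-1)r^2$, which for fixed $r$ is the same up to scaling. It discards the nonnegative $(2a-3)$ contribution, uses positivity of the cross term $2(q-1)(1-r^a)(r^a-r^2)$, and bounds the constant term via $(1-r^2)(1-r^{2a-2})\le(1-r^a)^2$ together with $2a/3\le1$. The only cosmetic difference is that the paper proves that key inequality through the exact identity $(1-r^a)^2-(1-r^2)(1-r^{2a-2})=(r-r^{a-1})^2$ rather than by log-concavity of $\alpha\mapsto\log(1-r^\alpha)$.
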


\begin{proof}
Since $1<p<q$ and $1<a\leq3/2$, we have $0<r<1$ and 
$$-\frac{q-p}{3}(2a-3)(p-1)(1-r^{2a-2})\geq0.$$
It follows that 
$$\Delta(a,p,q)\geq \Big((p-2)-(q-2)r^a\Big)^2
-\frac{2a(q-p)}{3(q-1)}
(1-r^{2a-2}).$$
Using $p=1+(q-1)r^2$ to eliminate $p$ yields  
\begin{align*}
\Delta(a,p,q)&\geq\left[(1-r^a)+(q-1)(r^a-r^2)\right]^2
-\frac{2a}{3}(1-r^2)(1-r^{2a-2})\\
&=
(1-r^a)^2-\frac{2a}{3}(1-r^2)(1-r^{2a-2})
+2(q-1)(1-r^a)(r^a-r^2)
+(q-1)^2(r^a-r^2)^2\\
&>
(1-r^a)^2-\frac{2a}{3}(1-r^2)(1-r^{2a-2})\\
&\geq (1-r^a)^2 - (1-r^2)(1-r^{2a-2}) 
 = (r-r^{a-1})^2>0. 
\end{align*}
Here we use $0<r<1$ and $1<a\leq3/2$ again. 
This completes the proof.
\end{proof}

\section{Elementary analysis of the sign changes of $\Delta(a,p)$}

From Proposition~\ref{prop:local-criterion}, we see that the sign of  $\Delta(a,p)$ defined in \eqref{eq:Delta} plays an important role in our analysis.   This  section is  devoted to the analysis of the sign-changing of $\Delta(a,p)$.  

\begin{proposition}[See Figure \ref{fig:implicit-phase-diagram} for a quick view]\label{prop:sign-analysis}
We have the following three cases. 
\begin{itemize}
\item $1<a\le 3/2$: small $a$ implies positive discriminant. That is,   
\[
(a, p)\in (1, 3/2]\times (1,2) \Longrightarrow \Delta(a,p)>0.
\]
\item $a\ge 7/4$:  large $a$ implies negative discriminant. That is, 
\[
(a, p)\in [7/4, \infty) \times (1,2) \Longrightarrow \Delta(a,p)<0.
\]
\item $3/2< a< 7/4$: intermediate $a$ implies sign-changing discriminant. That is,  the equation $\Delta(a,p)=0$ has a unique solution $p_a\in (1,2)$
and 
\[
\Delta(a,p)<0\quad(1<p<p_a),
\qquad
\Delta(a,p)>0\quad(p_a<p<2).
\]
\end{itemize}
\end{proposition}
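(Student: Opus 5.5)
Since $(2-p)(2ap-3p+3)/3>0$ for $a>1$, $1<p<2$ (indeed $2ap-3p+3=p(2a-3)+3>p(2a-3)+3p-3p$, and for $a>1$ one has $2ap-3p+3 > 2p-3p+3=3-p>0$), the sign of $\Delta(a,p)$ equals the sign of
\[
g_a(p):=(p-1)^{a-1}-\frac{2ap-3}{2ap-3p+3}.
\]
Substituting $s=p-1\in(0,1)$ gives $2ap-3=2as+2a-3$ and $2ap-3p+3=(2a-3)s+2a$. Set $\beta=a-1>0$. I will study
\[
\phi(s):=\log\bigl(s^{\beta}\bigr)-\log\frac{2as+2a-3}{(2a-3)s+2a}
\]
wherever the fraction is positive. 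Where the fraction is $\le 0$, namely for $2as+2a-3\le0$, which can only occur when $a<3/2$, we have $g_a>0$ trivially. At $s=1$ the fraction equals $(4a-3)/(4a-3)=1$, so $g_a(2)=0$ and $\phi(1)=0$. The whole analysis is therefore the behaviour of $g_a$ near $s=1$ and at $s\to0^+$, together with a bound on the number of zeros in between.

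\textbf{Endpoints.} As $s\to0^+$, $s^\beta\to0$ while the fraction tends to $(2a-3)/(2a)$. Hence $g_a(0^+)>0$ if $a<3/2$, $g_a(0^+)<0$ if $a>3/2$, and $g_a(0^+)=0$ if $a=3/2$. Near $s=1$, I expand $\phi(1-\varepsilon)$ to second order. Differentiating,
\[
\phi'(s)=\frac{\beta}{s}-\frac{2a}{2as+2a-3}+\frac{2a-3}{(2a-3)s+2a}.
\]
At $s=1$ this gives $\phi'(1)=\beta-\frac{2a}{4a-3}+\frac{2a-3}{4a-3}=(a-1)-\frac{3}{4a-3}=\frac{4a^2-7a}{4a-3}=\frac{a(4a-7)}{4a-3}$. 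This is exactly where $7/4$ appears. For $a<7/4$ we have $\phi'(1)<0$, so $\phi>0$ just to the left of $1$. For $a>7/4$ we have $\phi<0$ there. At $a=7/4$, $\phi'(1)=0$ and I will compute $\phi''(1)$ to get the sign, expecting $\phi''(1)>0$ so that $\phi<0$ on the left.

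\textbf{Counting zeros.} The key step is to show that $\phi'$ has at most one sign change on the relevant interval, so that $\phi$ has at most one interior critical point. Writing $\phi'(s)=\beta/s-R(s)$, I clear denominators: $s(2as+2a-3)((2a-3)s+2a)\phi'(s)$ is a polynomial of degree at most $2$ in $s$. Its leading coefficient is $\beta\,2a(2a-3)-2a(2a-3)+(2a-3)2a=2a\beta(2a-3)$. It therefore has at most two real roots, and I will check that at most one lies in $(0,1)$ (or in $(s_*,1)$ with $s_*=(3-2a)/(2a)$ when $a<3/2$) by evaluating the polynomial at the endpoints. This gives the following shapes.

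For $3/2<a<7/4$, $\phi\to-\infty$ at $0$, $\phi(1)=0$, and $\phi$ is decreasing near $1$. So $\phi$ must increase and then decrease, giving exactly one interior zero $p_a$, with $g_a<0$ before it and $g_a>0$ after it.

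For $a\ge7/4$, $\phi$ increases to $0$ at $1$ and is therefore negative throughout.

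For $a\le3/2$, $\phi$ (or $g_a$) is positive near both ends and has a single critical point, which is a maximum or lies outside the interval. I will verify positivity in this case directly: $g_a>0$ follows from $s^{\beta}>$ the fraction, via concavity of $s^\beta$ for $\beta\le1/2$ against the convex-type behaviour of the Möbius fraction, whose derivative at $1$ exceeds $\beta$.

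\textbf{Main obstacle.} The hardest part is the root-location step for the quadratic numerator of $\phi'$, uniformly in $a$, together with the borderline cases $a=3/2$ and $a=7/4$. At $a=7/4$ I will rely on the second-derivative computation. At $a=3/2$ the fraction reduces to $3s/3=s$, so $g=s^{1/2}-s>0$ directly.
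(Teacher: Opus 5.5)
\textbf{Gap: the range $a\ge 7/4$.} To conclude that $\phi$ increases to $0$, you need the cleared numerator of $\phi'$ to be positive on $(0,1)$. That numerator is $a\,N(s)$ with $N(s)=ks^2+(2k-3)s+k$ and $k=2(a-1)(2a-3)$. Evaluating at the endpoints gives only $N(0)=k>0$ and $N(1)=4k-3=(4a-3)(4a-7)\ge 0$. For a convex quadratic this does not rule out two roots inside $(0,1)$.

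Your fallback at $a=7/4$ also fails. One has $\phi''(1)=-(a-1)+\frac{3}{4a-3}=-\phi'(1)$, so $\phi''(1)=0$ at $a=7/4$. Moreover, $\phi(1)=\phi'(1)=0$ together with $\phi''(1)>0$ would make $\phi$ positive just left of $1$, which is the opposite of what you want. The local sign there actually comes from $\phi'''(1)=21/128>0$.

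The missing observation is the discriminant $(2k-3)^2-4k^2=9-12k=-3(4a-3)(4a-7)$. It is $\le 0$ exactly when $a\ge 7/4$, and at $a=7/4$ one has $N(s)=\frac34(s-1)^2$. Equivalently, $N$ is palindromic, so its roots are $s_0$ and $1/s_0$, and at most one lies in $(0,1)$. Hence $\phi'>0$ on $(0,1)$ and $\phi<\phi(1)=0$ for every $a\ge 7/4$, with no expansion at $s=1$ needed.

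With this repair your argument is complete, and for the outer ranges it differs from the paper's.

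Your treatment of $3/2<a<7/4$ is exactly the paper's. Its $H_a$ is your $\phi$ in the variable $p=s+1$, and its factor $2(a-1)(2a-3)p^2-3p+3$ is your $N$. The opposite signs of $N$ at the two endpoints give exactly one critical point.

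For $a\le 3/2$ your concavity idea works, but the relevant input is the chord, not $F'(1)>\beta$; a tangent at $1$ only bounds a concave function from above. Concretely, $h=s^\beta-F$ is concave on $[0,1]$ because $F''\ge 0$ when $a\le 3/2$. Since $h(0)=(3-2a)/(2a)\ge 0$ and $h(1)=0$, $h$ lies above its chord. Alternatively, all coefficients of $N$ are negative for $a\le 3/2$, so $\phi$ decreases to $0$ on $(s_*,1)$.

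The paper handles both outer ranges differently. It proves $\partial_a\Delta(a,p)<0$ and checks the sign at the boundary parameters $a=3/2$ and $a=7/4$ in closed form, the latter via a polynomial identity in $\theta=p-1$. Your one-variable route is more uniform and exhibits $7/4$ directly as the vanishing of a discriminant. The paper's monotonicity in $a$ has the advantage of being reused to show that $a\mapsto p_a$ is a bijection.
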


\begin{proposition}\label{prop-one-one}
 The  equation $\Delta(a,p_a)=0$ is equivalent to 
 \begin{align}\label{exp-eq}
 (p_a-1)^{a-1}
=\frac{2ap_a-3}
       {2ap_a-3(p_a-1)}.
 \end{align}
Moreover, the map 
$(3/2, 7/4)\to (1, 2)$, $a\mapsto p_a$
determined by $\Delta(a,p_a)=0$ is bijective. 
\end{proposition}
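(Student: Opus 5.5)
The equivalence is immediate from the factorization \eqref{eq:Delta}. For $a>1$ and $1<p<2$ the prefactor satisfies $2-p>0$ and $2ap-3p+3=p(2a-3)+3>3-p>0$ (note $2a-3>-1$). Hence $\Delta(a,p)=0$ if and only if the bracket vanishes, which is exactly \eqref{exp-eq} since $2ap-3(p-1)=2ap-3p+3$. Existence and uniqueness of $p_a$ for $a\in(3/2,7/4)$ is taken from Proposition~\ref{prop:sign-analysis}. So the real content is bijectivity of $a\mapsto p_a$.

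For the bijection I would set $F(a,p):=(p-1)^{a-1}-\frac{2ap-3}{2ap-3p+3}$, so that $\operatorname{sign}\Delta=\operatorname{sign}F$ on $(1,\infty)\times(1,2)$. Proposition~\ref{prop:sign-analysis} gives $F(a,\cdot)<0$ on $(1,p_a)$ and $F(a,\cdot)>0$ on $(p_a,2)$.

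\emph{Injectivity.} I would show $\partial_a F<0$ on $(1,2)$. Here $\partial_a (p-1)^{a-1}=(p-1)^{a-1}\log(p-1)<0$, and
\[
\partial_a\frac{2ap-3}{2ap-3p+3}=\frac{2p(2ap-3p+3)-2p(2ap-3)}{(2ap-3p+3)^2}=\frac{2p(6-3p)}{(2ap-3p+3)^2}>0 .
\]
So $F$ is strictly decreasing in $a$. Now take $a<a'$ in $(3/2,7/4)$. Then $F(a',p_a)<F(a,p_a)=0$, which places $p_a$ in the region $p<p_{a'}$. Hence $p_a<p_{a'}$, so $a\mapsto p_a$ is strictly increasing and in particular injective.

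\emph{Surjectivity.} Given $p\in(1,2)$, I would consider $g(a):=F(a,p)$, which is continuous and strictly decreasing on $[3/2,7/4]$. By Proposition~\ref{prop:sign-analysis}, $\Delta(3/2,p)>0$ and $\Delta(7/4,p)<0$, so $g(3/2)>0>g(7/4)$. The intermediate value theorem then gives $a\in(3/2,7/4)$ with $F(a,p)=0$. By uniqueness of the zero in Proposition~\ref{prop:sign-analysis}, this forces $p=p_a$.

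The only delicate point is that the argument leans on the endpoint sign statements at $a=3/2$ and $a=7/4$ from Proposition~\ref{prop:sign-analysis}. If one wants independence from that, it can be checked directly. At $a=3/2$ the right side of \eqref{exp-eq} is $(3p-3)/3=p-1>(p-1)^{1/2}$ fails, i.e.\ $(p-1)^{1/2}>p-1$, so $F>0$. At $a=7/4$ one must verify
\[
(p-1)^{3/4}<\frac{7p-6}{p+6}\qquad\text{on }(1,2).
\]
Both sides equal $1$ at $p=2$ and both vanish to first order appropriately near $p=1$ with the left side dominating only if it is violated. I would substitute $p=1+s^4$ and reduce this to a polynomial inequality in $s\in(0,1)$, which I expect to be the main computational step.
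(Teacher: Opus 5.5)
Your proof is correct and follows essentially the paper's route: strict monotonicity in $a$, together with the endpoint signs at $a=3/2$ and $a=7/4$, gives a unique zero $a$ for each $p$, and combining this with the uniqueness of $p_a$ for each $a$ yields the bijection (working with the bracket $F$ instead of $\Delta$ only simplifies the $a$-derivative, and your comparison argument also shows that $a\mapsto p_a$ is strictly increasing). Your optional closing heuristic is inaccurate, since the right side $(7p-6)/(p+6)$ equals $1/7$ at $p=1$ and the real difficulty is third-order contact at $p=2$; nevertheless the substitution $p=1+s^4$ works, because $s^7-7s^4+7s^3-1=(s-1)^3(s^4+3s^3+6s^2+3s+1)<0$ on $(0,1)$.
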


\subsection{Monotonicity of $a\mapsto \Delta(a,p)$ and the signs of $\Delta(a,p)$ at special points}
\begin{lemma}
\label{lem:mon-a}
Let $1<p<2$. Then the function $a\mapsto \Delta(a,p)$ 
is strictly decreasing on $(1,\infty)$. 
\end{lemma}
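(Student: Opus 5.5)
We work with the first line of \eqref{eq:Delta}, where $a$ enters in a simple way:
\[
\Delta(a,p)=(2-p)^2-\frac{(2-p)}{3}\,\bigl(1-(p-1)^{a-1}\bigr)\,(2ap-3p+3).
\]
Fix $1<p<2$. The first term does not depend on $a$, and $(2-p)/3>0$. So it suffices to show that
\[
g(a):=\bigl(1-(p-1)^{a-1}\bigr)(2ap-3p+3)
\]
is strictly increasing on $(1,\infty)$.

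Put $s:=p-1\in(0,1)$. The two factors of $g$ behave as follows.
\begin{itemize}
\item The factor $1-s^{a-1}$ is strictly increasing in $a$, since $s^{a-1}$ decreases for $0<s<1$. It is also strictly positive for $a>1$.
\item The factor $2ap-3p+3=p(2a-3)+3$ is strictly increasing in $a$, since $p>0$.
\item This second factor is positive for all $a>1$, because $p(2a-3)+3>3-p>0$.
\end{itemize}
A product of two positive, strictly increasing functions is strictly increasing. Hence $g$ is strictly increasing, and therefore $\Delta(a,p)=(2-p)^2-\frac{2-p}{3}g(a)$ is strictly decreasing in $a$.

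As a cross-check we can differentiate directly:
\[
g'(a)=-s^{a-1}\log s\,\bigl(p(2a-3)+3\bigr)+2p\bigl(1-s^{a-1}\bigr).
\]
Both terms are positive, because $-\log s>0$.

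The only point needing care is the positivity of $2ap-3p+3$ when $a<3/2$. This is the one place where one might worry, and the bound $p(2a-3)+3>3-p>0$ settles it. No other obstacle is expected.
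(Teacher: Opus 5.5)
Your proof is correct and essentially matches the paper's: the paper differentiates in $a$ and uses the same three facts ($0<(p-1)^{a-1}<1$, $\log(p-1)<0$, and $2ap-3p+3=2p(a-1)+(3-p)>0$), which is exactly your cross-check. Your main argument, that a product of two positive strictly increasing factors is strictly increasing, is just a derivative-free version of the same reasoning.
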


\begin{proof}
Recall the definition \eqref{eq:Delta} for $\Delta(a,p)$. A direct computation gives  
\begin{align*}
\partial_a\Delta(a,p)
=
-\frac{2-p}{3}
\Bigl[
2p\bigl(1-(p-1)^{a-1}\bigr)
-(2ap-3p+3)(p-1)^{a-1}\log(p-1)
\Bigr].
\end{align*}
Clearly, for $a>1$ and $1<p<2$, one has 
\[
0< (p-1)^{a-1}<1,
\qquad
\log(p-1)<0,\qquad
2ap-3p+3
=
2p(a-1)+(3-p)>0.
\]
Hence $\partial_a\Delta(a,p)<0$ and we complete the proof. 
\end{proof}

\begin{lemma}\label{lem-special}
Let $1<p<2$.  Then 
$
\Delta(3/2, p)>0$  and $\Delta(7/4, p)<0$. 
\end{lemma}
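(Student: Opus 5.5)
The plan is to factor off the positive prefactor in the second line of \eqref{eq:Delta} and reduce both claims to an elementary one-variable inequality. For $a>1$ and $1<p<2$, we have $2-p>0$ and $2ap-3p+3=2p(a-1)+(3-p)>0$. Hence
\[
\operatorname{sign}\Delta(a,p)=\operatorname{sign}\Bigl((p-1)^{a-1}-\frac{2ap-3}{2ap-3p+3}\Bigr),
\]
and it suffices to compare $(p-1)^{a-1}$ with the rational function on the right at $a=3/2$ and at $a=7/4$. Throughout, set $s:=p-1\in(0,1)$.

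\textbf{Case $a=3/2$.} The rational function becomes $\frac{3p-3}{3}=s$. The claim $\Delta(3/2,p)>0$ is therefore $s^{1/2}>s$, which holds because $0<s<1$.

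\textbf{Case $a=7/4$.} The rational function is
\[
\frac{7p/2-3}{p/2+3}=\frac{7p-6}{p+6}=\frac{7s+1}{s+7}.
\]
So we need $s^{3/4}(s+7)<7s+1$ for $s\in(0,1)$. To remove the fractional power, substitute $s=u^4$ with $u\in(0,1)$. The inequality becomes
\[
g(u):=1-7u^3+7u^4-u^7>0 .
\]
The main step is to find a factorization that makes positivity evident. I observe that $g(1)=g'(1)=g''(1)=0$ while $g'''(1)=-84\neq0$, so $(1-u)^3$ divides $g$. Matching coefficients then gives
\[
g(u)=(1-u)^3\,(1+3u+6u^2+3u^3+u^4).
\]
This is confirmed by checking the $u^4$ coefficient: $1-9+18-3=7$. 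Both factors are strictly positive for $0<u<1$, so $g(u)>0$, which yields $\Delta(7/4,p)<0$.

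The only nonroutine point is the $a=7/4$ case. There the inequality is tight to third order at $p=2$ (that is, $u=1$), which is exactly why $7/4$ is the threshold. A crude estimate would therefore fail near the endpoint, and the triple-root factorization is what makes the argument work.
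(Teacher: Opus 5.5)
Your proof is correct and follows the paper's route: you remove the positive prefactor, the case $a=3/2$ reduces to $\sqrt{s}>s$, and at $a=7/4$ you reduce to a degree-$7$ polynomial with a triple root at the endpoint $p=2$. The only difference is cosmetic. The paper raises $1+7\theta>\theta^{3/4}(7+\theta)$ to the fourth power and factors the result as $(1-\theta)^3(\theta^4+31\theta^3+384\theta^2+31\theta+1)$, whereas your substitution $s=u^4$ gives the sparser factorization $(1-u)^3(1+3u+6u^2+3u^3+u^4)$.
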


\begin{proof}
For $a = 3/2$, direct computation gives 
\begin{align*}
\Delta\left(3/2,p\right)
=
(2-p)^2\left(1-\frac{1-\sqrt{p-1}}{2-p}\right)
=
(2-p)^2\frac{\sqrt{p-1}}{1+\sqrt{p-1}}
>0.
\end{align*}
And for $a=7/4$, one has 
\begin{align*}
\Delta(7/4, p) = (2-p)^2\left(1-\frac{\bigl(1-(p-1)^{3/4}\bigr)(p/2+3)}{3(2-p)}\right)
 = -\frac{2-p}{6}\Big[7 p-6 -(p-1)^{3/4} (p+6)\Big]. 
\end{align*}
Then proving $\Delta(7/4,p)<0$
is equivalent to proving
\[
7 p-6 -(p-1)^{3/4} (p+6)>0. 
\]
Writing  $\theta=p-1\in(0,1)$, it suffices to prove
\[
(1+7\theta)^4-\theta^3(7+\theta)^4
=
(1-\theta)^3
\bigl(\theta^4+31\theta^3+384\theta^2+31\theta+1\bigr)>0,
\]
which is obvious.
\end{proof}

\subsection{Proof of Proposition~\ref{prop:sign-analysis}}

{\flushleft Case 1}. If $1<a\le 3/2$,  then by Lemmas~\ref{lem:mon-a} and~\ref{lem-special}, for any $1<p<2$,  one has 
\[
\Delta(a,p) \ge \Delta(3/2, p)>0. 
\]

{\flushleft Case 2}.  If $a\ge 7/4$,  then by Lemmas~\ref{lem:mon-a} and~\ref{lem-special}, for any $1<p<2$,  one has 
\[
\Delta(a,p)\le \Delta(7/4, p)<0. 
\]

{\flushleft Case 3}. If  $3/2<a<7/4$, then the last assertion of Proposition~\ref{prop:sign-analysis} is given in Lemma~\ref{lem:unique-p} below.

\begin{lemma}
\label{lem:unique-p}
For every $a\in(3/2,7/4)$, there exists a unique
$p_a\in(1,2)$ such that
$
\Delta(a,p_a)=0
$
and 
\[
\Delta(a,p)<0\quad(1<p<p_a),
\qquad
\Delta(a,p)>0\quad(p_a<p<2).
\]
\end{lemma}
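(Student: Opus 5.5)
We reduce to the sign of a single scalar function and use a unimodality argument. For $a>1$ and $1<p<2$ the prefactor $\frac{(2-p)(2ap-3p+3)}{3}$ in \eqref{eq:Delta} is positive, as already noted in the proof of Lemma~\ref{lem:mon-a}. So the sign of $\Delta(a,p)$ equals the sign of
\[
g(p):=(p-1)^{a-1}-F(p),\qquad F(p):=\frac{2ap-3}{2ap-3p+3}.
\]
Since $a>3/2$, we have $3/(2a)<1$. Hence $2ap-3>0$ on $(1,2)$, so $F>0$ there. This lets us pass to
\[
h(p):=(a-1)\log(p-1)-\log(2ap-3)+\log(2ap-3p+3),
\]
which has the same sign as $g$ on $(1,2)$. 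The claim becomes: $h$ has a unique zero $p_a\in(1,2)$, with $h<0$ before it and $h>0$ after it.

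\textbf{Step 1: boundary behaviour.} As $p\to1^+$, $h(p)\to-\infty$, because $F(1)=(2a-3)/(2a)>0$. At $p=2$ we have $F(2)=1$, so $g(2)=h(2)=0$.

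For the behaviour just left of $2$, set $s=2-p$. Note that $(2ap-3p+3)-(2ap-3)=3s$, so $F=1-3s/(2ap-3p+3)$. Expanding,
\[
g(p)=\Bigl(\tfrac{3}{4a-3}-(a-1)\Bigr)s+O(s^2)=\frac{a(7-4a)}{4a-3}\,s+O(s^2).
\]
This coefficient is positive exactly because $a<7/4$. Hence $g>0$ and $h>0$ just to the left of $2$, and $h'(2)=-\frac{a(7-4a)}{4a-3}<0$. Combined with $h(1^+)=-\infty$, the intermediate value theorem gives at least one zero of $h$ in $(1,2)$.

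\textbf{Step 2: uniqueness via the derivative.} We compute
\[
h'(p)=\frac{a-1}{p-1}-\frac{2a}{2ap-3}+\frac{2a-3}{2ap-3p+3}.
\]
Multiplying by the positive quantity $(p-1)(2ap-3)(2ap-3p+3)$ gives a numerator $N(p)$. The cubic terms cancel: their coefficient is $(a-1)\cdot 2a(2a-3)-2a(2a-3)+(2a-3)\cdot 2a=2a(a-1)(2a-3)$. So $N$ is a polynomial of degree at most $2$. (This degree bookkeeping is the routine computation I would verify carefully.)

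The endpoint signs of $N$ are forced. First, $h'(1^+)=+\infty$, so $N(1)>0$; indeed $N(1)=(a-1)(2a-3)(2a)>0$. Second, $h'(2)<0$ from Step 1, so $N(2)<0$. A polynomial of degree at most two that is positive at $1$ and negative at $2$ has exactly one root in $(1,2)$. If it had three sign changes, it would need degree at least three.

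Consequently $h$ is strictly increasing on $(1,p_*)$ and strictly decreasing on $(p_*,2)$ for some $p_*\in(1,2)$. Since $h(2)=0$, we get $h>0$ on $[p_*,2)$. On $(1,p_*)$, $h$ rises strictly from $-\infty$ to $h(p_*)>0$, so it has exactly one zero $p_a$ there, with $h<0$ on $(1,p_a)$ and $h>0$ on $(p_a,2)$. Translating back through the positive prefactor yields the stated sign pattern of $\Delta(a,\cdot)$.

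\textbf{Main obstacle.} The delicate point is Step 2: confirming that $N$ really has degree at most two and that its signs at $p=1$ and $p=2$ are as claimed. If the degree count failed, the fallback is to show directly that $N$ has a single sign change on $[1,2]$, for example via Descartes' rule after the substitution $p=1+\theta$.
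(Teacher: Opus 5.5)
Your proof is correct and takes the same route as the paper. The paper uses the same function $H_a=h$, the same factorization $H_a'(p)=\frac{a}{(p-1)(2ap-3)(2ap-3p+3)}\bigl[2(a-1)(2a-3)p^2-3p+3\bigr]$, and the same increase-then-decrease argument; your endpoint values $N(1)=2a(a-1)(2a-3)>0$ and $N(2)=a(4a-3)(4a-7)<0$ justify the single sign change that the paper only asserts.

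One small correction: no cubic terms ever arise, because each of the three summands of $N$ is already a product of two linear factors. The quantity $2a(a-1)(2a-3)$ you compute is the nonzero $p^2$ coefficient of $N(p)=a\bigl[2(a-1)(2a-3)p^2-3p+3\bigr]$, not a cubic coefficient that cancels.
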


\begin{proof}
By the formula \eqref{eq:Delta}, for $a\in(3/2,7/4)$ and $1<p<2$, the sign of $\Delta(a,p)$ is determined by the sign of 
\[
(p-1)^{a-1}
-\frac{2ap-3}{2ap-3p+3}. 
\]
This leads to the definition 
\[
H_a(p)
:=
(a-1)\log(p-1)
-\log\frac{2ap-3}{2ap-3p+3}
\]
and the lemma follows from the following claims
\begin{itemize}
\item Claim (i): $
\lim_{p\to1^{+}}H_a(p)=-\infty$ and $
H_a(2)=0$;
\item Claim (ii): $H_a$ has exactly one zero in $(1,2)$. 
\end{itemize}

Claim (i) is elementary, hence  it remains to prove Claim (ii). 

Direct computation gives 
\[
H_a'(p)
=
\underbrace{\frac{
a
}{
(p-1)(2ap-3)(2ap-3p+3)
} }_{\text{denoted $I_1(a,p)$}} \cdot \underbrace{  \Big[ 2(a-1)(2a-3)p^2-3p+3\Big]}_{\text{denoted $I_2(a,p)$}}.
\]
We need to analyze the sign of $H_a'(p)$.  Clearly,  for $a\in(3/2,7/4)$, one has 
\begin{itemize}
\item $I_1(a,p)>0$ for any $p\in (1,2)$;
\item  there exists $\widetilde{p_a}\in (1,2)$ such that   \[
I_2(a,p)>0  \quad (1<p< \widetilde{p_a}),  \qquad  I_2(a, p)<0 \quad (\widetilde{p_a}<p<2).
\]
\end{itemize}
  Therefore,  for 
$a\in(3/2,7/4)$, 
\begin{align}\label{inc-dec}
  \text{$H_a$ is strictly increasing on $(1,\widetilde{p_a})$ and strictly decreasing on  $(\widetilde{p_a},2)$.
}
\end{align}
Thus, combining Claim (i) and \eqref{inc-dec}, we complete the proof of Claim (ii).  
\end{proof}

\subsection{Proof of Proposition \ref{prop-one-one}}
The equivalence between the equation $\Delta(a,p_a)=0$ and \eqref{exp-eq} is clear from the formula \eqref{eq:Delta} for $\Delta(a,p)$.  Now for any $p\in(1,2)$, by Lemma~\ref{lem:mon-a}, the function
$a\mapsto\Delta(a,p)$ is continuous and strictly decreasing on $(1, \infty)$, while
Lemma~\ref{lem-special} gives
\[
\Delta(3/2,p)>0,
\qquad
\Delta(7/4,p)<0.
\]
Consequently there is a unique $a\in(3/2,7/4)$ for which
$\Delta(a,p)=0$.  On the other hand, Lemma~\ref{lem:unique-p} says
that, for each $a\in(3/2,7/4)$, there is a unique
$p_a\in(1,2)$ with $\Delta(a,p_a)=0$.  These two uniqueness
statements show that $a\mapsto p_a$ is a bijection from $(3/2,7/4)$ onto $(1,2)$.

\section{From local  to global}
\label{sec:threshold-curve}
This section is devoted to lifting the local hypercontractive phase transition obtained in Proposition~\ref{prop:local-criterion} to  the global hypercontractive phase transition.

\begin{proposition}\label{coro-haodeya}
Let $3/2<a<7/4$ and $1<p<2$. Then 
\[
\Delta(a,p)\le0
\quad\Longleftrightarrow\quad
\mathcal{D}_{a,p}(\xi,\eta)\ge0
\quad(\xi,\eta\ge0).
\]
\end{proposition}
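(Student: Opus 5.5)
The forward direction ($\Leftarrow$) is immediate from what is already proved: if $\mathcal{D}_{a,p}\ge0$ on $[0,\infty)^2$ then \eqref{D-neg} fails, so by item (1) of Proposition~\ref{prop:local-criterion} we cannot have $\Delta(a,p)>0$. All the work is in the converse: assuming $\Delta(a,p)\le0$, equivalently $1<p\le p_a$ by Lemma~\ref{lem:unique-p}, prove global nonnegativity of $\mathcal{D}_{a,p}$.

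I would use the explicit formula from the proof of Lemma~\ref{lem:fourth-order-expansion}, writing $\xi=s-v$, $\eta=s+v$ with $s\ge0$, $|v|\le s$:
\[
\mathcal{D}_{a,p}=\Bigl(\tfrac{2+(a-1)(\xi^p+\eta^p)}{2a}\Bigr)^{2/p}-\Bigl(\tfrac{1+(a-1)s}{a}\Bigr)^2-\tfrac{a-1}{a}(p-1)v^2-\tfrac{a-1}{a^2}(p-1)^a(s-1)^2 .
\]
The plan is a two-step reduction.

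\emph{Step one: reduce the $v$-direction.} For fixed $s$, the $L^p$ term depends on $v$ only through $m:=\tfrac12(\xi^p+\eta^p)$. I would lower-bound $m$ by a sharp two-point inequality. The natural choice is the symmetric Bonami--Beckner inequality
\[
\Bigl(\tfrac{|s-v|^p+|s+v|^p}{2}\Bigr)^{2/p}\ge s^2+(p-1)v^2 ,
\]
or a refinement of it that keeps the fourth-order term $\tfrac{(2-p)(p-1)}{12}v^4/s^2$, since it is exactly this term that competes in $P_{a,p}$. This converts $\mathcal{D}_{a,p}$ into a function of the two scalars $s$ and $u:=v^2/s^2\in[0,1]$. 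I then need monotonicity and convexity properties in $u$, obtained by composing with the concave map $x\mapsto x^{2/p}$ handled via a tangent-line bound.

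\emph{Step two: the resulting two-variable inequality.} I would first check the slice $v=0$, which is the one-dimensional inequality
\[
\Bigl(\tfrac{1+(a-1)s^p}{a}\Bigr)^{2/p}\ge \Bigl(\tfrac{1+(a-1)s}{a}\Bigr)^2+\tfrac{a-1}{a^2}(p-1)^a(s-1)^2 .
\]
This is a biased two-point inequality with "time" $(p-1)^{a/2}\le\sqrt{p-1}$. It should follow from the known sharp biased Bernoulli constants (Oleszkiewicz, Wolff, \cite{CaoFanHanQiuWang2026}), or directly from a one-variable calculus argument, because $(p-1)^a<(p-1)$ leaves slack.

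For $v\ne0$, I would study $g(s,u)$, minimizing in $u$ and then in $s$. Near $s=1$, $u=0$ the analysis must reproduce exactly the quadratic form $P_{a,p}(w,v^2)$, whose semidefiniteness is precisely $\Delta\le0$. So I would organize the estimate as
\[
\mathcal{D}_{a,p}\ \ge\ \tfrac{(a-1)(p-1)}{a^2}\,P_{a,p}\bigl(s-1,\,\phi(s,v)\bigr)+R(s,v),
\]
with an explicit $\phi\approx v^2$ and a remainder $R\ge0$.

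The main obstacle is this global step. The local expansion is degenerate exactly on the curve $\Delta=0$, so any lossy inequality in step one destroys the result near $(1,1)$. To handle this I would split the domain into three regions. Near $(1,1)$ I use the refined two-point estimate and semidefiniteness of $P_{a,p}$, with monotonicity in $p$ (via Lemma~\ref{lem:mon-a}-type arguments, reducing to the boundary case $p=p_a$). For large $s$ or $|v|$ comparable to $s$, I use crude Bonami--Beckner plus the slack $(p-1)^a<p-1$. For $s$ near $0$, I use a direct evaluation. I expect the interface between the near and far regimes to require careful explicit calculus, possibly with a monotonicity-in-$p$ argument showing it suffices to treat $p=p_a$.
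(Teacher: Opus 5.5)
\textbf{Verdict: genuine gap in the converse direction.} Your direction $\mathcal D_{a,p}\ge0\Rightarrow\Delta(a,p)\le0$ is fine, and it is exactly how the paper handles it. The converse is the whole content of the proposition, and there you give only a strategy. Its concrete ingredients fail exactly where the difficulty lies.

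\emph{(i) The crude Bonami--Beckner minorant fails along all of $s=1$, not just near $(1,1)$.} At $s=1$ it gives $\bigl(\tfrac1a+\tfrac{a-1}{a}(1+\theta v^2)^{p/2}\bigr)^{2/p}$ with $\theta=p-1$. This is a weighted power mean of exponent $p/2<1$, so it is strictly smaller than the arithmetic mean $1+\tfrac{a-1}{a}\theta v^2$. That arithmetic mean is precisely what is subtracted at $s=1$, where the $(s-1)^2$ slack term vanishes. Hence your lower bound is negative on $\{s=1,\ 0<|v|\le1\}$, including the region ``$|v|$ comparable to $s$'' where you planned to use it.

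\emph{(ii) Your refinement has the wrong quartic coefficient.} The correct expansion is $\bigl(\tfrac{|1-z|^p+|1+z|^p}{2}\bigr)^{2/p}=1+\theta z^2+\tfrac{p\theta(2-p)}{6}z^4+O(z^6)$, not $\tfrac{(2-p)\theta}{12}z^4$. Lowering that coefficient lowers the $Y^2$-coefficient of $P_{a,p}$; with yours it becomes $\tfrac{(2-p)(a+3-3p)}{12}$. The resulting discriminant then strictly exceeds $\Delta(a,p)$, so your local argument fails for all $p$ in a left neighbourhood of $p_a$, where $\Delta\le0$.

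\emph{(iii) Even the exact expansion does not settle $p=p_a$,} which is the case you want to reduce to. There $P_{a,p}$ is only semidefinite, with null direction $s-1=\tfrac{2-p}{2(1-\theta^{a-1})}v^2$. Along it, the sign of $\mathcal D_{a,p}$ is decided by sixth-order terms, which semidefiniteness does not see. For $p<p_a$, the smallest eigenvalue of $P_{a,p}$ tends to $0$ as $p\uparrow p_a$. So a neighbourhood obtained from definiteness plus a remainder bound shrinks, and no fixed near/far splitting closes the argument.

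\emph{(iv) The reduction to $p=p_a$ is unfounded.} Lemma~\ref{lem:mon-a} concerns $a\mapsto\Delta(a,p)$, not $p\mapsto\mathcal D_{a,p}$. The sharp $(p,2)$ inequality has no a priori monotonicity in $p$, because the time $\tau(p,2)$ moves with $p$. That the sharp set is $(1,p_a]$ is an output of this proposition, not an input.

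\textbf{The missing idea is symmetrization,} which removes the need for any local expansion. Write $(w,u,v)$ for the values at $0,-1,1$, as the paper does.
\begin{itemize}
\item The paper proves the homogeneous inequality $\mathcal R(w,u,v)\le\mathcal P(w,u,v)^{2/p}$ by maximizing $\mathcal R$ on $\{\mathcal P=1\}$. It first shows any maximizer is interior.
\item The Lagrange equations then reduce to $\theta(u-v)=\kappa(u^\theta-v^\theta)$ and $\theta^a\bigl(w-\tfrac{u+v}{2}\bigr)=\kappa\bigl(w^\theta-\tfrac{u^\theta+v^\theta}{2}\bigr)$.
\item If $u\ne v$, the substitution $v/u=e^{-2s}$, $w/u=e^{-s}t$ turns these into hyperbolic identities. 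These are contradicted by $\kappa\ge1$, Lemma~\ref{lem:Ltheta-bound}, Lemma~\ref{lem:terminal-hyperbolic}, and $\Delta\le0$ in the form $\theta^{a-1}\le\frac{2ap-3}{2ap-3\theta}$. This is the only place $\Delta\le0$ is used.
\item Hence every maximizer lies on your slice $\xi=\eta$, and the degenerate curve $\Delta=0$ needs no special treatment.
\end{itemize}

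On that slice the required bound is Lemma~\ref{lem:weighted-two-point}. It does not follow from the slack $\theta^a<\theta$. The optimal $(p,2)$ constant of the biased two-point measure with masses $1/a,(a-1)/a$ is itself strictly below $\theta$, and it tends to $0$ as $a\downarrow1$. So this is a genuine inequality, which the paper proves by a convexity argument valid for $3/2\le a\le2$.
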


\begin{corollary}\label{cor-global}
Let $3/2<a<7/4$ and $1<p<2$.  Let $p_a\in (1,2)$ be the unique solution of 
\[
(p_a-1)^{a-1}
=\frac{2ap_a-3}
       {2ap_a-3(p_a-1)}.
       \]
       Then 
\[
t_{\mathrm{opt}}^{(Q_a)}(p,2)
=\frac12\log\frac1{p-1}
\quad\Longleftrightarrow\quad 1<p\le p_a.
\]
\end{corollary}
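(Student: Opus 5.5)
The corollary follows once Proposition~\ref{coro-haodeya} is combined with the sign analysis of $\Delta(a,\cdot)$. We may use that proposition, since it is stated earlier in the paper. The plan is to chain three equivalences.

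\emph{Step 1.} By the equivalence \eqref{HC-def}, the equality $t_{\mathrm{opt}}^{(Q_a)}(p,2)=\tau(p,2)=\frac12\log\frac1{p-1}$ holds if and only if $\mathcal{D}_{a,p}(\xi,\eta)\ge0$ for all $\xi,\eta\ge0$.

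\emph{Step 2.} Since $3/2<a<7/4$ and $1<p<2$, Proposition~\ref{coro-haodeya} converts this nonnegativity into the single scalar condition $\Delta(a,p)\le 0$.

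\emph{Step 3.} We identify the set of $p\in(1,2)$ with $\Delta(a,p)\le0$. Lemma~\ref{lem:unique-p}, equivalently the third case of Proposition~\ref{prop:sign-analysis}, says the following. There is a unique $p_a\in(1,2)$ with $\Delta(a,p_a)=0$. Moreover $\Delta(a,p)<0$ for $1<p<p_a$ and $\Delta(a,p)>0$ for $p_a<p<2$. Hence $\Delta(a,p)\le 0$ holds exactly when $1<p\le p_a$.

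\emph{Step 4.} We check that this $p_a$ is the one in the statement, that is, the unique solution in $(1,2)$ of $(p_a-1)^{a-1}=\frac{2ap_a-3}{2ap_a-3(p_a-1)}$. By \eqref{eq:Delta}, $\Delta(a,p)$ equals the prefactor $\frac{(2-p)(2ap-3p+3)}{3}$ times the difference of the two sides of this equation. The prefactor is strictly positive on $(1,2)$, because $2ap-3p+3=2p(a-1)+3-p>0$. So the zeros of $\Delta(a,\cdot)$ in $(1,2)$ are exactly the solutions of the equation, which is the content of Proposition~\ref{prop-one-one}. Uniqueness of the solution then follows from Lemma~\ref{lem:unique-p}.

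Chaining Steps 1 through 4 gives the claimed equivalence. The endpoint $p=p_a$ falls on the equality side because Proposition~\ref{coro-haodeya} uses the non-strict inequality $\Delta\le0$.

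There is no real obstacle in the corollary itself: all the analytic difficulty sits in Proposition~\ref{coro-haodeya}. That proposition requires a global argument. Locally, Proposition~\ref{prop:local-criterion} only gives positivity near $(1,1)$ when $\Delta<0$, and it says nothing about the degenerate case $\Delta=0$. The only care needed here is to apply Proposition~\ref{coro-haodeya} in the correct direction in both implications, and to confirm that its hypotheses match the range $3/2<a<7/4$, $1<p<2$.
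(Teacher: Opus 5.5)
Your proposal is correct and matches the paper's proof, which likewise chains the equivalence \eqref{HC-def}, Proposition~\ref{coro-haodeya}, Lemma~\ref{lem:unique-p} and Proposition~\ref{prop-one-one}. The paper also cites Corollary~\ref{cor-non-classical}, but that direction is already contained in the equivalence of Proposition~\ref{coro-haodeya}, and your check that the prefactor $\frac{(2-p)(2ap-3p+3)}{3}$ is positive (so zeros of $\Delta(a,\cdot)$ are exactly solutions of the defining equation for $p_a$) makes explicit a point the paper leaves implicit.
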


\subsection{Proof of Proposition~\ref{coro-haodeya}}
By the part (1) of Proposition~\ref{prop:local-criterion}, it suffices to prove 
\[
\Delta(a,p)\le0
\quad\Longrightarrow\quad
\mathcal{D}_{a,p}(\xi,\eta)\ge0
\quad(\xi,\eta\ge0).
\]

From now on, we always assume that $\Delta(a,p)\le 0$.  The remaining proof uses some
technical inequalities collected in
Appendix~\ref{app:global-minimum-inequalities}. 
For convenience, in what follows, we shall use both notations $p$ and $\theta$ with  
\[
\theta=p-1\in(0,1).
\]  

For $w,u,v\ge0$, define the homogeneous functionals
\begin{align*}
\mathcal P(w,u,v)
&:=\frac{2w^p+(a-1)(u^p+v^p)}{2a},
\\
\mathcal R(w,u,v)
&:=\left(\frac{2w+(a-1)(u+v)}{2a}\right)^2
 +\frac{a-1}{4a}\theta(u-v)^2
 +\frac{a-1}{4a^2}\theta^a(u+v-2w)^2.
\end{align*}
Then
$$
\mathcal{D}_{a,p}(\xi,\eta)
=\mathcal P(1,\xi,\eta)^{2/p}
 -\mathcal R(1,\xi,\eta).
$$
It is therefore enough to prove
\begin{equation}\label{eq:homogeneous-global-inequality}
\mathcal R(w,u,v)\le \mathcal P(w,u,v)^{2/p}
\qquad(w,u,v\ge0).
\end{equation}

By homogeneity, we consider the compact set
\[
\Sigma:=\{(w,u,v)\in[0,\infty)^3:\mathcal P(w,u,v)=1\}
\]
and the desired inequality \eqref{eq:homogeneous-global-inequality} is reduced to 
$$
\kappa:=\sup_{(w,u,v)\in \Sigma}\mathcal R(w,u,v) \le 1.
$$
Since $(1,1,1)\in \Sigma$ and $\mathcal{R}(1,1,1)=1$, we have $\kappa\geq 1$. Hence we shall actually prove $\kappa=1$.

Since $\Sigma$ is compact, the maximum of $\mathcal{R}$ on $\Sigma$ is attained.   Choose any maximizer $(w,u,v)\in\Sigma$. It suffices to show 
\begin{equation}
\label{max-kappa}
\kappa = \mathcal{R}(w, u,v)\leq 1. 
\end{equation}

{\flushleft \bf Claim I:} we have $w, u,v>0$.  
 
 Indeed,
if $w=0$, then
\[
\partial_w\mathcal R(0,u,v)
=\frac{a-1}{a^2}(1-\theta^a)(u+v)>0.
\]
%Replacing $(0,u,v)$ by $(\varepsilon,u,v)$ and renormalizing it back
%to $\Sigma$ can be made completely explicit.  
Put
\[
\rho_1(\varepsilon)
=\left(1-\frac{\varepsilon^p}{a}\right)^{1/p}=1-\frac{\varepsilon^p}{ap}
 +O(\varepsilon^{2p}).
\]
Since $(0,u,v)\in\Sigma$,  we have
$\mathcal P\bigl(\varepsilon,\rho_1(\varepsilon)u,
                 \rho_1(\varepsilon)v\bigr)=1$. 
Then 
\[
\mathcal R\bigl(\varepsilon,\rho_1(\varepsilon)u,
                 \rho_1(\varepsilon)v\bigr)-\mathcal R(0,u,v)
=\partial_w\mathcal R(0,u,v)\,\varepsilon
 +O(\varepsilon^p)>0
\]
for every sufficiently small $\varepsilon>0$.  Thus $(0,u,v)$ cannot
be a maximizer.  

Similarly, if $u=0$, then
$$
\partial_u\mathcal R(w,0,v)
=\frac{a-1}{2a^2}
\left[2(1-\theta^a)w
 +(a-1-a\theta+\theta^a)v\right]>0.
$$
Here the last coefficient is positive because
$\psi(t):=a-1-at+t^a$ satisfies $\psi(1)=0$ and
$\psi'(t)=a(t^{a-1}-1)<0$ on $(0,1)$. 
Put 
\[
\rho_2(\varepsilon)=\left(1-\frac{a-1}{2a}\varepsilon^p\right)^{1/p}=1-\frac{a-1}{2ap}\varepsilon^p+O(\varepsilon^{2p}).
\]
Since $(w,0,v)\in\Sigma$,  we have
$\mathcal P\bigl(\rho_2(\varepsilon)w,\varepsilon,
                 \rho_2(\varepsilon)v\bigr)=1$. Also, one has 
\[
\mathcal R\bigl(\rho_2(\varepsilon)w,\varepsilon,
                 \rho_2(\varepsilon)v\bigr)-\mathcal R(w,0,v)
=\partial_u\mathcal R(w,0,v)\,\varepsilon
 +O(\varepsilon^p)>0
\]
for every sufficiently small $\varepsilon>0$.
Thus $(w,0,v)$ cannot
be a maximizer.   

By symmetry, $(w,u,0)$  also cannot be a maximizer. 

This completes the proof of {\bf Claim I}. 

{\flushleft \bf Claim II:}  we have $u=v$. 

Assuming {\bf Claim II}, we obtain 
\[
\kappa = \mathcal R(w,u,u) =\left(\frac{w+(a-1)u}{a}\right)^2
 +\frac{a-1}{a^2}\theta^a(u-w)^2
\]
and 
\[
1=\mathcal P(w,u,u)^{2/p} =  \Big(\frac{w^p+(a-1)u^p}{a}\Big)^{2/p}. 
\]
Hence Lemma~\ref{lem:weighted-two-point}  gives the desired inequality \eqref{max-kappa}. 

It remains to prove {\bf Claim II}. Indeed,  the Lagrange multiplier theorem gives a number $\lambda$ with
\begin{align}\label{eq:lagrange}
\nabla\mathcal R(w,u,v)
=\lambda\nabla\mathcal P(w,u,v).
\end{align}
The functions $\mathcal R$ and $\mathcal P$ are homogeneous of
degrees $2$ and $p$, respectively.  Therefore, by Euler's identities for homogeneous functions,  one obtains 
\[
2\mathcal R(w,u,v)
=\lambda p\,\mathcal P(w,u,v).
\]
Since $\mathcal R(w,u,v)=\kappa$ and $\mathcal P(w,u,v)=1$, it follows
that $\lambda=2\kappa/p$.  Expanding the Lagrange equations \eqref{eq:lagrange} gives
\begin{align*}
\frac{2w+(a-1)(u+v)}{2a}
-\frac{a-1}{2a}\theta^a(u+v-2w)
&=\kappa w^\theta,\\
\frac{2w+(a-1)(u+v)}{2a}
+\frac{\theta}{2}(u-v)
+\frac{\theta^a}{2a}(u+v-2w)
&=\kappa u^\theta,\\
\frac{2w+(a-1)(u+v)}{2a}
-\frac{\theta}{2}(u-v)
+\frac{\theta^a}{2a}(u+v-2w)
&=\kappa v^\theta.
\end{align*}
Subtracting the last two equations, and then comparing the first
equation with their average, yields
\begin{align}
\theta(u-v)
&=\kappa(u^\theta-v^\theta),
\label{eq:Lagrange-difference}\\
\theta^a\left(w-\frac{u+v}{2}\right)
&=\kappa\left(
w^\theta-\frac{u^\theta+v^\theta}{2}
\right).
\label{eq:Lagrange-average}
\end{align}

Assume by contradiction that  $u\neq v$.  By symmetry, we may assume $u>v$
and put
\[
x:=\frac vu\in(0,1),
\qquad
y:=\frac wu>0.
\]
Equations \eqref{eq:Lagrange-difference} and
\eqref{eq:Lagrange-average} imply
\begin{align}\label{eq:sanjiaozhuanhuan-1}
\frac{1+x^\theta}{2}-y^\theta
=
\theta^{a-1}\frac{1-x^\theta}{1-x}
\left(\frac{1+x}{2}-y\right).
\end{align}
Moreover, the constraint $\mathcal P(w,u,v)=1$, together with
\eqref{eq:Lagrange-difference}, gives
\begin{align}\label{eq:sanjiaozhuanhuan-2}
\left(
\frac{2y^p+(a-1)(1+x^p)}{2a}
\right)^{(2-p)/p}
=
\frac{\theta(1-x)}
{\kappa(1-x^\theta)}.
\end{align}

Introduce the following change of variables 
\[
x=e^{-2s},
\qquad
t=e^s y,
\qquad s>0.
\]
Then  \eqref{eq:sanjiaozhuanhuan-1} and \eqref{eq:sanjiaozhuanhuan-2} become 
\begin{align}\label{eq:stationary-hyperbolic-relation-1}
\cosh(\theta s)-t^\theta
=
\theta^{a-1}
\frac{\sinh(\theta s)}{\sinh s}
(\cosh s-t),
\end{align}
\begin{align}\label{eq:stationary-hyperbolic-relation-2}
t^p+(a-1)\cosh(ps)
=
a\left(
\frac{\theta\sinh s}
{\kappa\sinh(\theta s)}
\right)^{p/(2-p)}.
\end{align}

On the one hand,  combining  \eqref{eq:stationary-hyperbolic-relation-2} and $\kappa\geq1$ with
Lemma~\ref{lem:Ltheta-bound}, we obtain 
\begin{align*}
t^p+(a-1)\cosh(ps)
=
a\left(
\frac{\theta\sinh s}
{\kappa\sinh(\theta s)}
\right)^{p/(2-p)} <a  \cdot \frac{\cosh(ps)+2}{3}, 
\end{align*}
which is equivalent to 
\begin{equation}\label{eq:t-upper-bound}
1-t^p>
\frac{2a-3}{3}\bigl(\cosh(ps)-1\bigr) = \frac{4a-6}{3} \sinh^2(ps/2) .
\end{equation}
Since $a>3/2$, 
\eqref{eq:t-upper-bound} implies  that $0<t<1$ and hence 
\begin{align}\label{st-rel}
\cosh(s) >t. 
\end{align}

On the other hand, using \eqref{st-rel} and the elementary equivalence 
\[
\Delta(a,p)\leq0 \Longleftrightarrow (p-1)^{a-1}\leq \frac{2ap-3}{2ap-3\theta},
\]
the equality \eqref{eq:stationary-hyperbolic-relation-1} implies
$$
\cosh(\theta s)-t^\theta
\leq   \frac{\sinh(\theta s)}{\sinh s} \frac{2ap-3}{2ap-3\theta} (\cosh s-t).
$$
That is 
\begin{align*}
2 \sinh^2(\theta s/2)+  1-t^\theta
\leq  \underbrace{ \frac{\sinh(\theta s)}{\sinh s} \frac{2ap-3}{2ap-3\theta}}_{\text{denoted $\gamma$}}  \cdot \Big[2 \sinh^2(s/2) +1-t\Big].
\end{align*}
However,  since $0< \sinh(\theta s)< \theta \sinh(s)$ (see Lemma~\ref{lem:hyperbolic-monotonicity}) and $0< 2ap-3<2ap - 3 \theta$, one has $0<\gamma<\theta$. Hence, by using the elementary inequalities 
\[
1-t^\theta>\theta(1-t),
\qquad
p(1-t)>1-t^p \quad \text{for all $0<t<1$ and $0<\theta<1<p$,}
\]
one obtains 
\begin{align*}
0&\geq  2 \sinh^2(\theta s/2)+  1-t^\theta  - \gamma \Big[ 2 \sinh^2(s/2) +1 -t\Big]
\\
 & = 2 \sinh^2(\theta s/2) - 2\gamma  \sinh^2(s/2) + 1 - t^\theta - \gamma(1-t)
\\
 &>  2 \sinh^2(\theta s/2) - 2\gamma  \sinh^2(s/2) + \theta(1-t) - \gamma ( 1  -t)
\\
 &=  2 \sinh^2(\theta s/2)  - 2\gamma  \sinh^2(s/2)  + (\theta-\gamma)(1-t) 
\\
& >  2 \sinh^2(\theta s/2)   - 2\gamma  \sinh^2(s/2) + (\theta-\gamma)\frac{1-t^p}{p}. 
\end{align*}
The above inequality combined with \eqref{eq:t-upper-bound} implies 
\begin{align*}
0&\geq  2 \sinh^2(\theta s/2)+  1-t^\theta  - \gamma \Big[ 2 \sinh^2(s/2) +1 -t\Big] 
\\
&>2 \sinh^2(\theta s/2)   - 2\gamma  \sinh^2(s/2) + \frac{\theta-\gamma}{p} \frac{4a-6}{3}  \sinh^2(ps/2).
\end{align*}
This contradicts Lemma~\ref{lem:terminal-hyperbolic}. Thus we complete the proof of {\bf Claim II}.

\subsection{Proof of Corollary \ref{cor-global}}
By Corollary~\ref{cor-non-classical},  Proposition~\ref{prop-one-one}, Lemma~\ref{lem:unique-p},  Proposition~\ref{coro-haodeya}, as well as the equivalence  relation \eqref{HC-def},   we obtain the desired equivalence. 

\section{Proofs of the main theorems}

\subsection{The complementary rigidity range}

\begin{proposition}
\label{thm:global-rigidity}
For every $a\ge7/4$, the Markov chain generated by $Q_a$ is
hypercontractively rigid: for every $1<p<q<\infty$,
\[
t_{\mathrm{opt}}^{(Q_a)}(p,q)
=\frac12\log\frac{q-1}{p-1}.
\]
Equivalently, $\rho_{\mathrm{LS}}(Q_a)=1$.
\end{proposition}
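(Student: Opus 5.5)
Via the Gross correspondence, the claim reduces to the single log-Sobolev statement $\rho_{\mathrm{LS}}(Q_a)=1$ for $a\ge7/4$. Moreover, Gross's theorem shows that sharp hypercontractivity for one pair with $p<2=q$ already integrates to the full family. Indeed, hypercontractivity $\|P_t\|_{p\to2}\le1$ at $t=\tau(p,2)$ for all $p$ in a left neighbourhood of $2$ is, after differentiating at $p=2$, exactly the inequality $\operatorname{Ent}(f^2)\le 2\langle f,-Qf\rangle$. So it suffices to prove
\[
\mathcal D_{a,p}(\xi,\eta)\ge0\qquad(\xi,\eta\ge0)
\]
for every $a\ge7/4$ and every $p\in(1,2)$. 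Alternatively, one can prove it for $p$ close to $2$ and pass to the limit to get the log-Sobolev inequality, from which Gross returns all pairs $(p,q)$.

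To prove $\mathcal D_{a,p}\ge0$, I would repeat the argument of Proposition~\ref{coro-haodeya} verbatim with $a\ge7/4$ in place of $3/2<a<7/4$. By Proposition~\ref{prop:sign-analysis}, $\Delta(a,p)<0$ throughout $[7/4,\infty)\times(1,2)$, so the hypothesis $\Delta\le0$ used there holds. The steps in order are as follows. First, Claim I (no boundary maximizers) uses only $a>1$. Second, the Lagrange system and the hyperbolic change of variables are identical. Third, in Claim II the inequality \eqref{eq:t-upper-bound} uses $a>3/2$ and Lemma~\ref{lem:Ltheta-bound}, which is presumably stated for general $a$. Fourth, the bound $\gamma<\theta$ uses $\Delta\le0$ together with $2ap-3>0$, which still holds for $a\ge7/4$. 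The last step, the contradiction with Lemma~\ref{lem:terminal-hyperbolic}, must be checked to be valid for all $a>3/2$ rather than only for $a<7/4$. Finally, on the symmetric slice $u=v$, Lemma~\ref{lem:weighted-two-point} handles the weighted two-point inequality
\[
\Bigl(\tfrac{w+(a-1)u}{a}\Bigr)^2+\tfrac{a-1}{a^2}\theta^a(u-w)^2\le\Bigl(\tfrac{w^p+(a-1)u^p}{a}\Bigr)^{2/p}.
\]
This is a biased-Bernoulli hypercontractivity at a time longer than needed: its ``gap'' eigenvalue is $a$, while only $\theta^a$ is used. I expect that lemma is also stated for all $a>1$, or at least for all $a>3/2$.

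The main obstacle is the uniformity in $a$ of the appendix inequalities, especially Lemma~\ref{lem:terminal-hyperbolic}. Its coefficient $\tfrac{4a-6}{3}$ grows with $a$, which helps. However, $\gamma$ also depends on $a$ through $\tfrac{2ap-3}{2ap-3\theta}$, and this factor increases toward $1$. My plan is to bound $\gamma\le\tfrac{\sinh(\theta s)}{\sinh s}$ crudely, and to exploit $\tfrac{4a-6}{3}\ge\tfrac13$ for $a\ge7/4$. Failing that, I would use monotonicity in $a$: on the set where $u=v$ fails, compare $\mathcal R$ for $a$ with $\mathcal R$ at $a=7/4$. For large $a$, if neither route works directly, I would fall back on the structure of $Q_a$ itself. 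It equals $1$ times the projection onto the odd mode plus $a$ times the projection onto the even mode. The odd part is the symmetric two-point Bonami–Beckner piece, and the even part contracts faster than required. Via a tensorization-type decomposition $f=f_{\mathrm{even}}+f_{\mathrm{odd}}$, this gives the log-Sobolev inequality, with the remaining constant verified at $a=7/4$ by Chen–Liu–Saloff-Coste~\cite[Theorem~4.3]{ChenLiuSaloffCoste2008} for $7/4\le a\le2$.
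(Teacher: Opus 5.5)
Your reduction to $\rho_{\mathrm{LS}}(Q_a)=1$ is sound, and for $7/4\le a\le 2$ rerunning the proof of Proposition~\ref{coro-haodeya} works verbatim. Claim~I needs only $a>1$, Lemma~\ref{lem:Ltheta-bound} does not involve $a$, and $\Delta(a,p)<0$ comes from Proposition~\ref{prop:sign-analysis}. Lemma~\ref{lem:terminal-hyperbolic} is already stated for every $a>3/2$: in its proof $\mathrm{I}$, $\mathrm{II}$, $\mathrm{III}$ are each positive once $2a-3>0$. So the step you single out as the main obstacle is not one.

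\textbf{The gap.} The step that actually fails is the one you took on trust. Lemma~\ref{lem:weighted-two-point} is stated and proved only for $3/2\le a\le 2$, and its proof does not extend. For $a>2$ one has $(a-1)^2(p-1)^{a-2}\to0$ as $p\to1^+$, so $\ell''(1^+)=\frac4a\log^2\frac a2>0$ and the concavity of $\ell$ (Claim~B) fails. Hence for $a>2$ your argument stops exactly at the symmetric slice $u=v$.

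None of your fallbacks repairs this:
\begin{itemize}
\item Bounding $\gamma$ addresses a non-issue.
\item Comparing $\mathcal R$ with its value at $a=7/4$ is not set up, since $\mu_a$, and hence the constraint $\mathcal P=1$, also moves with $a$.
\item An even/odd ``tensorization'' has no basis on a three-point space, because $\operatorname{Ent}(f^2)$ does not split along $f=f_{\mathrm{even}}+f_{\mathrm{odd}}$.
\item \cite[Theorem~4.3]{ChenLiuSaloffCoste2008} stops at $a=2$, since $I+Q_a$ has the negative entry $2-a$ beyond that.
\end{itemize}
Separately, your claim that sharpness for \emph{one} pair $(p,2)$ integrates to the whole family is false. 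Theorem~\ref{thm:family-phase-diagram} is a counterexample for $3/2<a<7/4$ and $p\le p_a$. Your next sentence, which requires all $p$ in a left neighbourhood of $2$, is the correct and sufficient statement.

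\textbf{How to close it, and comparison with the paper.} Your own remark that the slice is biased two-point hypercontractivity closes the gap. With $\pi=(1/a,1-1/a)$ and $T_tf=\E_\pi f+e^{-t}(f-\E_\pi f)$, the inequality of Lemma~\ref{lem:weighted-two-point} reads $\|T_tf\|_{L^2(\pi)}\le\|f\|_{L^p(\pi)}$ with $e^{-2t}=(p-1)^a$. The Diaconis--Saloff-Coste two-point constant \cite{DiaconisSaloffCoste1996} gives $\rho_{\mathrm{LS}}=\frac{2(a-2)}{a\log(a-1)}$ for this chain. Gross's theorem then yields the inequality once $a\rho_{\mathrm{LS}}=\frac{2(a-2)}{\log(a-1)}\ge1$, which holds for all $a>2$ because $\log(a-1)\le a-2$. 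With this, your route is complete: you get $\mathcal D_{a,p}\ge0$ for all $p\in(1,2)$, differentiating at $p=2$ gives $\operatorname{Ent}(f^2)\le 2\langle f,-Q_af\rangle$, and Gross gives all pairs.

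The paper never touches $\mathcal D_{a,p}$ here; it computes $\rho_{\mathrm{LS}}(Q_a)$ by citation. For $7/4\le a\le2$ it applies \cite[Theorem~4.3]{ChenLiuSaloffCoste2008} to $K_\vartheta=I+Q_a$. For $a>2$ it rescales, writing $Q_a=(a-1)(\widetilde K_r-I)$ with $r=(a-2)/(a-1)$, and combines \cite[Theorem~4.1]{ChenLiuSaloffCoste2008}, $\alpha(\widetilde K_r)=(1-r)/2$, with $\rho_{\mathrm{LS}}(cQ)=c\,\rho_{\mathrm{LS}}(Q)$. Your repaired argument stays inside the machinery of Section~\ref{sec:threshold-curve} and treats all $a>3/2$ with $\Delta\le0$ uniformly. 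The paper's is shorter but depends entirely on the external three-point log-Sobolev computations.
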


\begin{proof}
It suffices to show that $\rho_{\mathrm{LS}}(Q_a)=1$. 
In the notation of Chen, Liu, and Saloff-Coste
\cite[Section~4]{ChenLiuSaloffCoste2008}, the logarithmic Sobolev constant is defined by 
\[
\alpha(K)
:=\inf_{\operatorname{Ent}_\mu(f^2)>0}
\frac{\langle f,(I-K)f\rangle_{L^2(\mu)}}
     {\operatorname{Ent}_\mu(f^2)},
\]
with
\[\operatorname{Ent}_\mu(f^2)=\!\int\! f^2\log f^2\,d\mu
-\! \int \! f^2\,d\mu\cdot \log \! \int \! f^2 \,d\mu.\]
Thus, if $Q=K-I$, our convention gives
$
\rho_{\mathrm{LS}}(Q)=2\alpha(K).
$
Suppose first that $7/4\le a\le2$ and put $\vartheta=a-1$.  After
ordering the states as $(-1,0,1)$, one has
\[
I+Q_a
=
\begin{pmatrix}
0&1&0\\[2pt]
\vartheta/2&1-\vartheta&\vartheta/2\\[2pt]
0&1&0
\end{pmatrix}
=:K_\vartheta .
\]
Here $\vartheta\in[3/4,1]$.  By
\cite[Theorem~4.3]{ChenLiuSaloffCoste2008},
$\alpha(K_\vartheta)=1/2$, and hence
$
\rho_{\mathrm{LS}}(Q_a)=1.
$

Now assume that $a>2$.  Then 
$
r=(a-2)/(a-1)\in(0,1)$. Again in the order $(-1,0,1)$, define
\[
\widetilde K_r
:=
\begin{pmatrix}
r&1-r&0\\[2pt]
1/2&0&1/2\\[2pt]
0&1-r&r
\end{pmatrix}.
\]
A direct comparison with
\eqref{eq:Q-a} gives
\[
\widetilde K_r-I= \widetilde Q_r = \frac1{a-1}Q_a.
\]
The invariant measure of $\widetilde K_r$ is
\[
\left(
\frac1{4-2r},
\frac{2-2r}{4-2r},
\frac1{4-2r}
\right)
=
\left(
\frac{a-1}{2a},
\frac1a,
\frac{a-1}{2a}
\right),
\]
which is precisely $\mu_a$ in the reordered coordinates. 

By \cite[Theorem 4.1]{ChenLiuSaloffCoste2008}, we have 
$
\alpha(\widetilde K_r)= (1-r)/2
$
and hence 
\[
\rho_{\mathrm{LS}}(\widetilde Q_r)
=2\alpha(\widetilde K_r)
=1-r
=1/(a-1).
\]
Using the elementary equality
$
\rho_{\mathrm{LS}}(cQ)=c\,\rho_{\mathrm{LS}}(Q) $ 
and the relation  $Q_a=(a-1)\widetilde Q_r$, one obtains
$
\rho_{\mathrm{LS}}(Q_a)=1.
$
This completes the whole proof.
\end{proof}

\subsection{Proofs of  Theorem~\ref{thm:prescribed-local-threshold} and  Theorem~\ref{thm:family-phase-diagram}}

By using Proposition~\ref{prop-one-one} and duality, 
Theorem~\ref{thm:prescribed-local-threshold} follows from Theorem~\ref{thm:family-phase-diagram}. 

It remains to prove  Theorem~\ref{thm:family-phase-diagram}.  We distinguish the three parameter ranges appearing in
Proposition~\ref{prop:sign-analysis}.
\begin{itemize}
\item 
If $1<a\le3/2$, then  for every $1<p<q$, by Proposition~\ref{prop-nopase-3/2},  one has 
\[
t_{\mathrm{opt}}^{(Q_a)}(p,q)>\tau(p,q).
\]
\item 
If $3/2<a<7/4$, then  the assertion of Theorem~\ref{thm:family-phase-diagram} is obtained in Corollary~\ref{cor-global}. 
\item  If $a\ge7/4$, Proposition~\ref{thm:global-rigidity} shows that
the sharp hypercontractive relation holds for every exponent pair.
\end{itemize}

Therefore, the Markov generator   $Q_a$ exhibits a
hypercontractivity phase transition if and only if
$a\in (3/2, 7/4)$ and we complete the whole proof.

\appendix

\section{Some technical inequalities}
\label{app:global-minimum-inequalities}

\subsection{A two-point inequality}

\begin{lemma}
\label{lem:weighted-two-point}
Let $3/2\le a\le2$ and $1<p<2$.  For every $x,y\ge0$,
$$
\left(\frac{x^p}{a}+
\left(1-\frac1a\right)y^p\right)^{2/p}
\ge
\left(\frac xa+\left(1-\frac1a\right)y\right)^2
+\frac{a-1}{a^2}(p-1)^a(x-y)^2.
$$
\end{lemma}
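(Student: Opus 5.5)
The plan is to recognize the inequality as the sharp $(p,2)$ hypercontractive estimate for a biased two-point Markov chain, and then to obtain it from the known optimal logarithmic Sobolev constant of that chain via Gross's theorem. The spectral gap of this chain is larger than one, which leaves room to spare.

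\textbf{Step 1: identify the chain.} Let $\pi$ be the probability measure on $\{0,1\}$ with $\pi(0)=1/a$ and $\pi(1)=1-1/a$. Encode $(x,y)$ as the function $f$ with $f(0)=x$ and $f(1)=y$. Take the reversible generator $L$ with jump rates $0\to1$ equal to $a-1$ and $1\to0$ equal to $1$. Its nonzero eigenvalue is $-a$, with eigenfunction $f-\E_\pi f$, so
\[
\|e^{tL}f\|_{L^2(\pi)}^2=(\E_\pi f)^2+e^{-2at}\operatorname{Var}_\pi(f),
\qquad
\operatorname{Var}_\pi(f)=\frac{a-1}{a^2}(x-y)^2 .
\]
At $t=\tau(p,2)=\frac12\log\frac1{p-1}$ we have $e^{-2at}=(p-1)^a$. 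Hence the right-hand side of the lemma is exactly $\|e^{\tau(p,2)L}f\|_2^2$, and the left-hand side is $\|f\|_{L^p(\pi)}^2$. The lemma is therefore the statement $\|e^{\tau(p,2)L}\|_{L^p(\pi)\to L^2(\pi)}\le1$. Nonnegativity of $x,y$ is harmless here.

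\textbf{Step 2: compute the logarithmic Sobolev constant.} I would invoke the classical optimal log-Sobolev constant for a biased two-point space, due to Diaconis and Saloff-Coste, with related results of Oleszkiewicz and Wolff. For the kernel $K=\pi$, which has spectral gap one, that constant is $\alpha=\frac{1-2\pi_*}{\log((1-\pi_*)/\pi_*)}$ in the convention above. Our generator satisfies $L=a(K-I)$ for this kernel $K=\pi$. Using $\rho_{\mathrm{LS}}(cQ)=c\,\rho_{\mathrm{LS}}(Q)$ and $\rho_{\mathrm{LS}}(K-I)=2\alpha(K)$, as in the proof of Proposition~\ref{thm:global-rigidity}, this gives
\[
\rho_{\mathrm{LS}}(L)=2a\,\frac{1-2/a}{\log(a-1)}=\frac{2(b-1)}{\log b},\qquad b=a-1\in[1/2,1].
\]
At $a=2$ this is understood as the limiting value $2$.

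\textbf{Step 3: show $\rho_{\mathrm{LS}}(L)\ge1$.} The map $b\mapsto(b-1)/\log b$ is increasing on $(0,1]$. This can be checked by differentiating, or by writing it as $\int_0^1 b^s\,ds$. So on $[1/2,1]$ its minimum is at $b=1/2$, where
\[
\rho_{\mathrm{LS}}(L)=\frac1{\log2}>1 .
\]

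\textbf{Step 4: apply Gross.} Gross's theorem gives $\|e^{tL}\|_{p\to2}\le1$ whenever $e^{2\rho_{\mathrm{LS}}(L)t}\ge1/(p-1)$. Since $\rho_{\mathrm{LS}}(L)\ge1$, this holds in particular at $t=\tau(p,2)$, which proves the lemma.

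\textbf{Main obstacle.} The delicate point is Step 2: one must match normalizations correctly. This means checking that the two-point result is stated for the kernel $K=\pi$ with gap one, the factor $2$ between $\alpha$ and $\rho_{\mathrm{LS}}$, and the time scaling by $a$. If citing the constant is deemed insufficient, a fallback is to prove the lemma directly. Set $y=1$ and $x=1+h$. Then compare $\phi(h)=\mathrm{LHS}-\mathrm{RHS}$ through its Taylor expansion at $h=0$; the quadratic term is positive because $(p-1)^{a-1}<1$. Away from $h=0$, a monotonicity argument in $h$ would handle the rest, but I expect this route to be messy. So I would rely on the logarithmic Sobolev route.
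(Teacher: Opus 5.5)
Your proof is correct, and it takes a genuinely different route from the paper's.

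The paper argues by elementary calculus. With $y=1$, it shows that the difference $h(x)$ is convex and satisfies $h(1)=h'(1)=0$. Convexity reduces to the nonnegativity of an auxiliary function $g(s)$. The minimum value $\ell(p)$ of $g$ vanishes at $p=1,2$ and is concave in $p$. That concavity rests on the numerical bounds $\frac83(\log\frac43)^2<\frac14<(a-1)^2(p-1)^{a-2}$, which are tailored to $3/2\le a\le2$.

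You instead recognize the inequality as $\|e^{\tau(p,2)L}\|_{L^p(\pi)\to L^2(\pi)}\le1$ for the biased two-point chain with spectral gap $a$. This chain is exactly the restriction of $Q_a$ to functions with $u=v$, which is how the lemma is used in Claim II. Your normalizations are right: $L=a(K-I)$ with $K=\pi$, so $\rho_{\mathrm{LS}}(L)=2a\alpha(K)=2(a-2)/\log(a-1)\ge 1/\log2>1$ on $[3/2,2]$. Gross's theorem in the form $e^{2\rho_{\mathrm{LS}}t}\ge (q-1)/(p-1)$ then applies at $t=\tau(p,2)$; for $p<2$ it follows from the $2\to p'$ statement by self-adjointness and duality.

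Your route explains why the lemma holds with room to spare, and it avoids ad hoc numerics. It also shows the inequality persists for every $a$ with $2(a-2)/\log(a-1)\ge1$. That range includes all $a>2$ and values of $a$ down to about $1.2$.

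The price is dependence on two nontrivial external results. One is the explicit biased two-point log-Sobolev constant of Diaconis and Saloff-Coste, which is itself proved by a delicate one-variable analysis. The other is Gross's theorem that a log-Sobolev inequality implies hypercontractivity. The paper's proof, by contrast, is self-contained.
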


\begin{proof}
By continuity, we just need to consider $x,y>0$. 
By homogeneity, set $y=1$ and define
\[
h(x):=
\left(\frac{x^p}{a}+1-\frac1a\right)^{2/p}
-\left(\frac xa+1-\frac1a\right)^2
-\frac{a-1}{a^2}(p-1)^a(x-1)^2.
\]
We need to prove 
\begin{equation}
\label{eq:h-pos}
\text{$h(x)\geq0$ for all $x>0$}. 
\end{equation}

{\flushleft \bf Claim A: } $h$ is convex on $(0, \infty)$. 

A direct computation gives $h(1)=h'(1)=0$. This combined with {\bf Claim A} yields the desired assertion \eqref{eq:h-pos}. 
  
  It remains to prove {\bf Claim A}.  Indeed, for $x>0$, a direct differentiation gives
\[
h''(x)=\frac2a\Big[ \underbrace{
(p-1) s^{2-p}
+\frac{2-p}{a} s^{2-2p}
-\frac1a-\frac{a-1}{a}(p-1)^a}_{\text{denoted $g(s)$}}
\Big] \quad   \text{\,\,with\,\,} s=\frac{1}{x} \left(\frac{x^p}{a}+1-\frac1a\right)^{1/p}.
\]
So it suffices to show that for $3/2\le a\le2$ and $1<p<2$, one has 
\[
\text{$g(s)\geq0$ for all $s>0$}. 
\]
To this purpose, note that  
\[
g'(s)=(p-1)(2-p)s^{1-p}
\left(1-\frac2a s^{-p}\right).
\]
Thus $g$ takes its minimum at $s=(2/a)^{1/p}$.  Hence it suffices to show that 
\begin{align}\label{ell-pos}
g((2/a)^{1/p}) = \underbrace{\frac p2\left(\frac2a\right)^{ \frac{2-p}{p} } - \Big[
\frac1a+\frac{a-1}{a}(p-1)^a\Big]}_{\text{denoted $\ell(p)$}}\geq 0. 
\end{align}

{\flushleft \bf Claim B:} $\ell$ is concave on $(1,2)$. 

Clearly,  $\ell(1)= \ell(2)=0$, which combined with {\bf Claim B} yields the desired inequality 
\eqref{ell-pos}. 

It remains to prove {\bf Claim B}. Indeed, one has 
\begin{align*}
\ell''(p)= \frac2{p^3}\left(\log\frac2a\right)^2 \cdot 
\left(\frac2a\right)^{\frac{2-p}{p}} - (a-1)^2(p-1)^{a-2}. 
\end{align*}
Now note that the assumption $3/2\le a\le2$ and $1<p<2$ implies that 
\[
(a-1)^2(p-1)^{a-2} >\frac14
\]
and (using the monotonicity on both $a$ and $p$, taking $a=3/2$ and $p=1$ gives the maximum)
 \[
\frac2{p^3}\left(\log\frac2a\right)^2
\left(\frac2a\right)^{\frac{2-p}{p}}
<\frac83\left(\log\frac43\right)^2<\frac14.
\]
Thus $\ell''(p)<0$ on $(1,2)$ and we complete the whole proof.
\end{proof}

\subsection{Some inequalities on hyperbolic functions}
 In this subsection, let $1<p<2$ and put 
 \[
 \theta=p-1\in(0,1).
 \]
 
The following lemma is classical and  follows from \cite[Theorem 3]{AVV}. 

\begin{lemma}
\label{lem:hyperbolic-monotonicity}
The function $s\mapsto\sinh s/s$ is strictly increasing and the
function $s\mapsto\tanh s/s$ is strictly decreasing on $(0,\infty)$.
In particular, for $s>0$,
$$
\frac{\sinh(\theta s)}{\sinh s}<\theta,
\qquad
\frac{\tanh(\theta s/2)}{\tanh (s/2)}>\theta, \qquad \frac{\sinh(ps/2)}{p\sinh(s/2)}>1,\qquad
\frac{\theta\sinh(ps/2)}{p\sinh(\theta s/2)}>1.
$$
\end{lemma}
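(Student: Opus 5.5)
The plan is to reduce everything to two standard monotonicity facts: $\phi(s)=\sinh s/s$ is strictly increasing on $(0,\infty)$, and $\psi(s)=\tanh s/s$ is strictly decreasing there. Once these are in hand, each of the four displayed inequalities follows by comparing the function at two points $\lambda s<s$ with $\lambda\in(0,1)$.

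For the monotonicity of $\phi$, compute
\[
\phi'(s)=\frac{s\cosh s-\sinh s}{s^2}.
\]
The numerator $g(s)=s\cosh s-\sinh s$ satisfies $g(0)=0$ and $g'(s)=s\sinh s>0$, so $g>0$ and $\phi$ is strictly increasing. For $\psi$, the numerator of $\psi'$ has the sign of $s\,\mathrm{sech}^2 s-\tanh s$. Multiplying by $\cosh^2 s$, this becomes
\[
s-\sinh s\cosh s=s-\tfrac12\sinh 2s<0 \qquad (s>0),
\]
since $\sinh x>x$. So $\psi$ is strictly decreasing. These monotonicity statements are also exactly what \cite[Theorem 3]{AVV} provides, so one may simply cite it.

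The four inequalities then follow in turn.
\begin{itemize}
\item Since $\theta s<s$ and $\phi$ is increasing, $\sinh(\theta s)/(\theta s)<\sinh s/s$, which gives $\sinh(\theta s)/\sinh s<\theta$.
\item Apply the decreasing function $\psi$ at $\theta s/2<s/2$: $\tanh(\theta s/2)/(\theta s/2)>\tanh(s/2)/(s/2)$, which rearranges to $\tanh(\theta s/2)/\tanh(s/2)>\theta$.
\item Since $ps/2>s/2$ (as $p>1$), $\phi(ps/2)>\phi(s/2)$, which is $\sinh(ps/2)/(p\sinh(s/2))>1$.
\item Since $ps/2>\theta s/2$ (as $p>\theta$), $\phi(ps/2)>\phi(\theta s/2)$. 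Multiplying out gives $\theta\sinh(ps/2)>p\sinh(\theta s/2)$, which is the last inequality.
\end{itemize}

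All steps are routine. The only point that needs care is checking the sign of the derivative of $\tanh s/s$, and that reduces to $\sinh 2s>2s$.
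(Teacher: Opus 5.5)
Your proof is correct and follows the same route as the paper, which cites \cite[Theorem~3]{AVV} for the monotonicity of $\sinh s/s$ and $\tanh s/s$ and leaves the four consequences implicit. Your derivative checks ($g'(s)=s\sinh s>0$ and $s-\tfrac12\sinh 2s<0$) make it self-contained, and your comparisons at $\theta s<s$, $\theta s/2<s/2$, $s/2<ps/2$ and $\theta s/2<ps/2$ are exactly the intended ones.
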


\begin{lemma}
\label{lem:Ltheta-bound}
For any  $s> 0$, one has 
\begin{equation}
\label{eq:Ltheta-bound-appendix}
 \left(\frac{\theta\sinh s}{\sinh(\theta s)}
\right)^{p/(2-p)}  < \frac{\cosh(ps)+2}{3}.
\end{equation}
\end{lemma}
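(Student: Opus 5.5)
The plan is to take logarithms and compare derivatives. Put $\theta=p-1$, so that $2-p=1-\theta$ and $p/(2-p)=p/(1-\theta)$, and define for $s\ge0$
\[
F(s):=\frac{p}{1-\theta}\log\frac{\theta\sinh s}{\sinh(\theta s)},\qquad
G(s):=\log\frac{\cosh(ps)+2}{3}.
\]
Both functions vanish at $s=0$, and \eqref{eq:Ltheta-bound-appendix} is equivalent to $F(s)<G(s)$ for $s>0$. A first sanity check is the Taylor expansion at $0$. Since $\theta\sinh s/\sinh(\theta s)=1+\frac{(1-\theta^2)s^2}{6}+O(s^4)$ and $1-\theta^2=(1-\theta)p$, we get $F(s)=\frac{p^2s^2}{6}+O(s^4)$. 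Likewise $G(s)=\frac{p^2s^2}{6}+O(s^4)$. So the inequality is degenerate to second order and the comparison happens at order $s^4$. A crude bound therefore cannot work near $0$, and a genuine derivative argument is needed.

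The key step is to prove $F'(s)<G'(s)$ for all $s>0$; integrating from $0$ then gives the claim. Explicitly,
\[
F'(s)=\frac{p}{1-\theta}\bigl(\coth s-\theta\coth(\theta s)\bigr),\qquad
G'(s)=\frac{p\sinh(ps)}{\cosh(ps)+2},
\]
so the target becomes
\[
\bigl(\coth s-\theta\coth(\theta s)\bigr)\bigl(\cosh(ps)+2\bigr)<(1-\theta)\sinh(ps),\qquad s>0 .
\]
Both sides are $\frac{(1-\theta^2)s}{3}+O(s^3)$, so the cubic coefficients must be checked first. Comparing the $s^3$ terms reduces to a polynomial inequality in $\theta\in(0,1)$ (essentially $\frac{(1-\theta^4)}{45}$ against combinations of $p^2$), which I expect to have a strict sign. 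At $s\to\infty$ the left side tends to $(1-\theta)\cdot e^{ps}/2$ up to lower-order terms, as does the right side. The correction terms decide the comparison: $\coth s-\theta\coth\theta s=(1-\theta)-2\theta e^{-2\theta s}+\dots$, which is smaller than $1-\theta$. This is consistent with the inequality, and with the limiting constant check $\theta^{p/(1-\theta)}<1/6$, which also holds since $\theta^{p/(1-\theta)}\to e^{-2}$ as $\theta\to1$.

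To prove the derivative inequality globally, I would multiply through by $\sinh s\,\sinh(\theta s)$. This turns it into an inequality between finite sums of exponentials $e^{\pm\alpha s}$ with frequencies $\alpha$ in $\{0,\,1\pm\theta,\,p,\,p\pm1\pm\theta,\dots\}$. I would then expand both sides in power series in $s$ and try to show that every coefficient of the difference is nonnegative, with a strictly positive $s^4$-level term; each coefficient is an explicit expression in $\theta$ of the type $\sum\pm\alpha^{2k}$. If termwise positivity fails for some coefficients, the fallback is to split into $s\le s_0$, treated by the series, and $s\ge s_0$, treated by the exponential asymptotics together with the monotonicity facts of Lemma~\ref{lem:hyperbolic-monotonicity}, namely $\sinh(\theta s)<\theta\sinh s$ and $\tanh(\theta s/2)/\tanh(s/2)>\theta$.

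The main obstacle is exactly this global step: the second-order degeneracy at $s=0$ and the fairly tight constant at $s=\infty$ (where $e^{-2}$ is compared against $1/6$ when $\theta\to1$) leave little room. The sign control of the power-series coefficients, uniformly in $\theta\in(0,1)$, is where I expect the real work to be.
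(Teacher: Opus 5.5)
This is essentially the paper's proof. Like the paper, you take the logarithmic difference vanishing at $s=0$, reduce to the derivative inequality $\bigl(\coth s-\theta\coth(\theta s)\bigr)\bigl(\cosh(ps)+2\bigr)<(1-\theta)\sinh(ps)$, clear denominators with $\sinh s\,\sinh(\theta s)$, and aim for termwise positivity of the resulting power series.

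The step you left open goes through, so the fallback is not needed. Doubling $\sinh s\,\sinh(\theta s)$ times (right side minus left side) and applying product-to-sum identities gives exactly $\theta\sinh(2s)-\sinh(2\theta s)+2p\sinh((1-\theta)s)-2(1-\theta)\sinh(ps)$. Its coefficient of $s^{2n+1}/(2n+1)!$ is $2(1-\theta^2)\sum_{j=0}^{n-1}\theta^{2j+1}\bigl[4^n-2\binom{2n}{2j+1}\bigr]\ge0$. This vanishes for $n\le1$ and is strictly positive for $n\ge2$. Incidentally, both sides of your derivative inequality are $(1-\theta^2)s+O(s^3)$, not $(1-\theta^2)s/3+O(s^3)$.
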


\begin{proof}
Recall that $\theta=p-1$. 
Let
\[
F(s)
:=
\log\frac{\cosh(ps)+2}{3}
-\frac{p}{2-p}
\log\frac{\theta\sinh s}{\sinh(\theta s)}.
\]
Clearly,
$
\lim_{s\to0^+}F(s)=0.
$
Therefore, it suffices to show that $F'(s)>0$ for any $s>0$.

Indeed, 
\[
\frac{F'(s)}{p}
=
\frac{\sinh(ps)}{\cosh(ps)+2}
-
\frac{\frac{\cosh s}{\sinh s}-\theta  \frac{\cosh(\theta s)}{\sinh(\theta s)}}{1-\theta}.
\]
Thus $F'(s)>0$ is equivalent to $N(s)>0$, where
\begin{align*}
N(s)
={}&
(1-\theta)\sinh(ps)\sinh s\,\sinh(\theta s)-
(\cosh(ps)+2)
\bigl(
\cosh s\,\sinh(\theta s)
-\theta\sinh s\,\cosh(\theta s)
\bigr).
\end{align*}
By elementary product-to-sum identities for hyperbolic cosine and sine functions, we obtain 
\[
2N(s)
=
\theta\sinh(2s)-\sinh(2\theta s)
+2p\sinh((1-\theta)s)
-2(1-\theta)\sinh(ps).
\]

We now expand $2N(s)$ into its power series. The
coefficient of $s^{2n+1}/(2n+1)!$ is zero for $n=0$, while for
$n\ge1$ it equals
\[
2(1-\theta^2)
\sum_{j=0}^{n-1}
\theta^{2j+1}
\left[
4^n-2\binom{2n}{2j+1}
\right].
\]
Every summand is nonnegative, since
\[
2\binom{2n}{2j+1}
\le
2\sum_{\substack{0\le \ell\le 2n\\ \ell\ \mathrm{odd}}}
\binom{2n}{\ell}
=
4^n.
\]
Moreover, the coefficient is strictly positive starting at order
$s^5$. Hence
$
N(s)>0,
$
as desired. 
\end{proof}

\begin{remark*}
The equality \eqref{eq:stationary-hyperbolic-relation-2} can be rewritten as 
$$
\left(\frac{\theta\sinh s}{\sinh(\theta s)}\right)^{p/(2-p)}
=\kappa^{p/(2-p)}
\frac{(a-1)\cosh(p s) + t^p}{a}. 
$$
Formally substituting $\kappa=t=1$ and $a=3/2$ in the right-hand side suggests the comparison function $(\cosh(p s)+2)/3$, i.e., the right-hand side of \eqref{eq:Ltheta-bound-appendix}. 
\end{remark*}

\begin{lemma}
\label{lem:terminal-hyperbolic}
Let $a>3/2$ and  $1<p<2$. For any $s>0$ put
\[
\gamma:=\frac{\sinh(\theta s)}{\sinh s}
\frac{2ap-3}{2ap-3\theta}.
\]
Then
$$
L: =  \sinh^2(\theta s/2)   - \gamma  \sinh^2(s/2) + \frac{\theta-\gamma}{p} \frac{2a-3}{3}  \sinh^2(ps/2)>0.
$$
\end{lemma}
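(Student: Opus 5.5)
The plan is to clear the dependence on $a$ from $\gamma$, so that $L$ becomes an explicit quadratic polynomial in the single parameter $b:=2a-3>0$. I will then show that each of its three coefficients is positive, using only Lemma~\ref{lem:hyperbolic-monotonicity}.

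First, write $\sigma:=\sinh(\theta s)/\sinh s$, so that $0<\sigma<\theta<1$ by Lemma~\ref{lem:hyperbolic-monotonicity}. Since $2ap=bp+3p$ and $p=1+\theta$, we have
\[
2ap-3=bp+3\theta,\qquad 2ap-3\theta=bp+3,
\]
so that
\[
\gamma=\sigma\,\frac{bp+3\theta}{bp+3}.
\]
Multiplying $L$ by $bp+3>0$ and using $\theta(bp+3)-\sigma(bp+3\theta)=bp(\theta-\sigma)+3\theta(1-\sigma)$, one gets
\[
(bp+3)L=C_0+C_1b+C_2b^2,
\]
with coefficients
\begin{align*}
C_0&=3\bigl[\sinh^2(\theta s/2)-\theta\sigma\sinh^2(s/2)\bigr],\\
C_1&=p\bigl[\sinh^2(\theta s/2)-\sigma\sinh^2(s/2)\bigr]+\frac{\theta(1-\sigma)}{p}\sinh^2(ps/2),\\
C_2&=\frac{\theta-\sigma}{3}\sinh^2(ps/2).
\end{align*}
Since $b>0$, it suffices to prove $C_0>0$, $C_1>0$ and $C_2>0$.

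The coefficient $C_2$ is positive because $\sigma<\theta$.

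For $C_0$, use $\sinh(\theta s)=2\sinh(\theta s/2)\cosh(\theta s/2)$ and $\sinh s=2\sinh(s/2)\cosh(s/2)$. This gives
\[
\theta\sigma\sinh^2(s/2)=\theta\sinh(\theta s/2)\cosh(\theta s/2)\tanh(s/2).
\]
Hence $C_0>0$ is equivalent to
\[
\frac{\tanh(\theta s/2)}{\tanh(s/2)}>\theta,
\]
which is exactly the second inequality of Lemma~\ref{lem:hyperbolic-monotonicity}.

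For $C_1$, the first bracket may be negative, so I will bound it from below and compensate with the last term.
\begin{itemize}
\item[(i)] From the $C_0$ step, $\sinh^2(\theta s/2)>\theta\sigma\sinh^2(s/2)$, so the bracket exceeds $-(1-\theta)\sigma\sinh^2(s/2)$.
\item[(ii)] From $\sinh(ps/2)>p\sinh(s/2)$ (Lemma~\ref{lem:hyperbolic-monotonicity}), the last term exceeds $p\,\theta(1-\sigma)\sinh^2(s/2)$.
\end{itemize}
Combining (i) and (ii),
\[
C_1>p\,\sinh^2(s/2)\bigl[\theta(1-\sigma)-(1-\theta)\sigma\bigr]=p\,\sinh^2(s/2)(\theta-\sigma)>0.
\]

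The main obstacle is finding the substitution $b=2a-3$, which turns $\gamma$ into the rational expression above. Once it is in place, the proof reduces to sign checks of three coefficients. The only delicate one is $C_1$, where the two hyperbolic comparisons (i) and (ii) must be combined so that the deficit becomes exactly $\theta-\sigma$. I will double-check the expansion of $(bp+3)L$ carefully, since a sign slip there would break the argument.
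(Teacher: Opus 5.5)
Your proof is correct. Its skeleton coincides with the paper's: your multiplier $bp+3$ is exactly the paper's $2ap-3\theta$, and your $C_0$, $bC_1$, $b^2C_2$ are precisely the paper's terms $\mathrm{I}$, $\mathrm{III}$, $\mathrm{II}$. You also handle $C_0$ and $C_2$ exactly as the paper does.

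The only genuine difference is the middle coefficient. The paper rewrites both $\sinh^2(\theta s/2)-\sigma\sinh^2(s/2)$ and $1-\sigma$ using product-to-sum identities. It then factors out $p\sinh((1-\theta)s/2)\sinh(\theta s/2)/\cosh(s/2)$. It concludes from
\[
\frac{\sinh(ps/2)}{p\sinh(s/2)}\cdot\frac{\theta\sinh(ps/2)}{p\sinh(\theta s/2)}\cdot\cosh(ps/2)>1,
\]
which needs both the third and fourth inequalities of Lemma~\ref{lem:hyperbolic-monotonicity} together with $\cosh(ps/2)>1$.

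You instead reuse the inequality already obtained for $C_0$ to bound the possibly negative bracket from below. On the positive term you use only $\sinh(ps/2)>p\sinh(s/2)$. The two estimates combine into the explicit lower bound $C_1>p\sinh^2(s/2)(\theta-\sigma)$.

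Your route is shorter, avoids the hyperbolic identities entirely, and dispenses with the fourth inequality of Lemma~\ref{lem:hyperbolic-monotonicity}. The paper's route gives an exact product formula for $\mathrm{III}$ rather than a lower bound, but that extra precision is not needed for the lemma.
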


\begin{proof}
Since $2ap-3\theta>0$ and $\theta=p-1$, multiplying $L$ by $2ap-3\theta$ and rearranging (expanding exactly as a quadratic polynomial in $2a-3$) gives
\begin{align*}
& (2ap-3\theta)\cdot L 
\\
={} & \underbrace{
3   \left[ \sinh^2(\theta s/2)  
-\theta\frac{\sinh(\theta s)}{\sinh s} \sinh^2(s/2) \right]}_{\text{denoted $\mathrm{I}$}} + \underbrace{
\frac{(2a-3)^2}{3}
\left(\theta-\frac{\sinh(\theta s)}{\sinh s}\right) \sinh^2(p s/2) }_{\text{denoted $\mathrm{II}$}}
\\
&+ 
\underbrace{(2a-3)\left\{
p\left[\sinh^2(\theta s/2)  
-\frac{\sinh(\theta s)}{\sinh s} \sinh^2(s/2)  \right]
+\frac{\theta}{p}
\left(1-\frac{\sinh(\theta s)}{\sinh s}\right) \sinh^2(p s/2)  
\right\}}_{\text{denoted $\mathrm{III}$}}. 
\end{align*}
It suffices to show that 
\[
\mathrm{I}>0, \quad \mathrm{II}>0, \quad  \mathrm{III}>0.
\]

For $\mathrm{I}$, since 
\[
\mathrm{I} = \frac{3}{2}   \sinh(\theta s) \tanh(s/2)    \left[ \frac{\tanh(\theta s/2) }{\tanh (s/2)}
-\theta \right],
\]
 by Lemma~\ref{lem:hyperbolic-monotonicity}, one has $\mathrm{I}>0$. 

Similarly, again  by Lemma~\ref{lem:hyperbolic-monotonicity}, one has $\mathrm{II}>0$. 

We now show $\mathrm{III}>0$.  Note that  (using $p=1+\theta$ and sum-to-product, product-to-sum formulae)
\begin{align*}
 \sinh^2\frac{\theta s}{2}
-\frac{\sinh(\theta s)}{\sinh s} \sinh^2\frac{s}{2}
=
-\frac{\sinh(\theta s/2)\sinh((1-\theta)s/2)}
{\cosh(s/2)},
\end{align*}
\begin{align*}
1-\frac{\sinh(\theta s)}{\sinh s}=
\frac{
\cosh(ps/2)\sinh((1-\theta)s/2)
}{
\sinh(s/2)\cosh(s/2)
}.
\end{align*} 
Therefore,
\begin{align*} 
\frac{\mathrm{III}}{2a-3} 
&=
-\frac{p\sinh(\theta s/2)\sinh((1-\theta)s/2)}{\cosh(s/2)}
+\frac{\theta\cosh(ps/2)\sinh((1-\theta)s/2) \sinh^2(ps/2)}{
p\sinh(s/2)\cosh(s/2)}\\
&=
\frac{p\sinh((1-\theta)s/2)\sinh(\theta s/2)}{\cosh(s/2)}
\left[
\frac{
\theta\sinh^2(ps/2)\cosh(ps/2)
}{
p^2\sinh(s/2)\sinh(\theta s/2)
}
-1
\right]\\
& = \frac{p\sinh((1-\theta)s/2)\sinh(\theta s/2)}{\cosh(s/2)}
\left[
\frac{\sinh(ps/2)}{p\sinh(s/2)} \cdot 
\frac{\theta\sinh(ps/2)}{p\sinh(\theta s/2)} \cdot 
\cosh(ps/2)-1
\right].
\end{align*}
Then, again by Lemma~\ref{lem:hyperbolic-monotonicity} and the inequality
$
\cosh(ps/2)>1,
$ one obtains $\mathrm{III}>0$. 

This completes the whole proof. 
\end{proof}

\section*{Acknowledgements}

\medskip
\noindent\textbf{Funding.}
Jie Cao is supported by NSFC (No.~12371075).  Shilei Fan is supported by NSFC (No.~12331004 and 12231013).  Yong Han is
supported by NSFC
(No.~12131016).  Yanqi Qiu is supported by NSFC (No.~12595283 and 12471145).  Zipeng Wang
is supported by NSFC
(No.~12471116) and the Fundamental Research Funds for the Central
Universities (No.~2025CDJ-IAIS YB004).

\medskip
\noindent\textbf{Declaration of competing interests.}
The authors declare no competing interests.

\medskip
\noindent\textbf{Disclosure of AI assistance}. The authors used ChatGPT (OpenAI, model GPT-5.6-sol) to
help simplify the proof of the nonnegativity of the defect function in Section~\ref{sec:threshold-curve}, after having
established a longer proof themselves. The model suggested a substantially shorter argument based
on standard homogenization and a change of variables involving hyperbolic functions, together
with manipulations of hyperbolic identities. The authors fully reworked the resulting argument
for clarity and, through further exchanges with the model, developed the concise formulation and
proofs of Lemma~\ref{lem:Ltheta-bound} and Lemma~\ref{lem:terminal-hyperbolic} in Appendix~\ref{app:global-minimum-inequalities}. The authors take full
responsibility for the mathematical correctness and final presentation of the paper.

\end{document}